\newtheorem{theorem}[]{Theorem}
\newtheorem{proposition}[]{Proposition}
\newtheorem{corollary}[]{Corollary}
\newtheorem{lemma}[]{Lemma}
\theoremstyle{definition}
\newtheorem{remark}[]{Remark}
\let\nc\newcommand
\nc{\la}{\label}
\def\bthm{\begin{theorem}}
\def\ethm{\end{theorem}}
\def\blemma{\begin{lemma}}
\def\elemma{\end{lemma}}
\def\bproof{\begin{proof}}
\def\eproof{\end{proof}}
\def\bprop{\begin{proposition}}
\def\eprop{\end{proposition}}
\def\bcor{\begin{corollary}}
\def\ecor{\end{corollary}}
\def\Z{\mathbb{Z}}
\def\k{\mathsf k}
\nc{\Hom}{{\rm{Hom}}}
\nc{\chara}{{\rm{char}}}
\nc{\Ext}{{\rm{Ext}}}
\nc{\HOM}{\underline{\rm{Hom}}}
\nc{\EXT}{\underline{\rm{Ext}}}
\nc{\TOR}{\underline{\rm{Tor}}}
\nc{\End}{{\rm{End}}}
\nc{\GL}{{\rm{GL}}}
\nc{\SL}{{\rm{SL}}}
\nc{\Rep}{{\rm{Rep}}}
\nc{\ad}{{\rm{ad}}}
\nc{\dlim}{\varinjlim}
\newcommand{\Frac}{{\rm{Frac}}}
\begin{document}
\title[Quantum linear Galois orders]{Quantum linear Galois orders} 
%Galois orders and rings of fractions} 

\author{Vyacheslav Futorny}
\author{Jo\~ao Schwarz}
\address{Instituto de Matem\'atica e Estat\'istica, Universidade de S\~ao
Paulo,  S\~ao Paulo SP, Brasil} \email{futorny@ime.usp.br,}\email{jfschwarz.0791@gmail.com}

\begin{abstract}
We define a class of quantum linear Galois algebras which include    
 the universal enveloping algebra $U_q(gl_n)$, the quantum Heisenberg Lie algebra and other quantum orthogonal Gelfand-Zetlin algebras of type $A$,    the subalgebras of $G$-invariants of the quantum affine space, quantum torus  for $G=G(m, p, n)$, and of the quantum Weyl algebra for $G=S_n$. We show that all quantum linear Galois algebras satisfy the quantum Gelfand-Kirillov conjecture.  Moreover, it is shown that the 
 the subalgebras of  invariants of the quantum affine space and of quantum torus  for the reflection groups and of the quantum Weyl algebra for symmetric groups  are, in fact, Galois orders 
 over an adequate commutative subalgebras and free as  right (left) modules over these subalgebras. In the rank $1$ cases the results hold for an arbitrary finite group of automorphisms when the field is $\mathbb C$. 
 \end{abstract}

 \maketitle

\section{Introduction}

The purpose of this paper is to quantize the results of \cite{FS2}, where subalgebras of invariants of Weyl algebras were studied for irreducible reflection groups. It was shown that in many cases these subalgebras have a structure of Galois orders over certain commutative domains. This feature indicates  a hidden skew group algebra structure of all these algebras.  

The theory of Galois rings and orders developed in  \cite{FO1}, \cite{FO2}. Classical examples includes   finite $W$-algebras of type $A$   \cite{FMO}, in particular the universal enveloping algebra of $gl_n$, and generalized Weyl algebras of rank $1$ over integral domains with infinite order automorphisms  \cite{Bavula}. The importance of the Galois order structure is in their representation theory, where one can effectively study the Gelfand-Tsetlin categories of modules with torsion for certain maximal commutative subalgebras \cite{Ovsienko}, \cite{FO2}.
 
Our  main objects of interest are the following  quantum algebras: the quantum affine space $O_q(\k^{2n})$, the quantum torus $O_q(\k*^{2n})$ and
 the quantum Weyl algebra $A_n^q(\k)$.

Our first  result shows that the subring of invariants $O_q(\k^{2n})^G$ of the quantum affine space is a Galois order over certain polynomial subalgebra when $G=G_m^{\otimes n}$ is a product of  cyclic groups (Proposition \ref{prop-affine-cyclic-order}) or $G=G(m,p,n)$ is one of non exceptional reflection groups (Theorem \ref{thm-affine-refl-order}):

\begin{theorem}\label{thm-main1} 
If  $G$ is a product of $n$ copies of a cyclic group of fixed finite order or one of the irreducible non exceptional reflection groups $G(m,p,n)$, then the invariant subring $O_q(\k^{2n})^G$ of the quantum affine space is 
 is a Galois order over  a polynomial  subalgebra $\Gamma$ of $O_q(\k^{2n})^G$. Moreover, $O_q(\k^{2n})^G$ is free as a left (right) $\Gamma$-module.
\end{theorem}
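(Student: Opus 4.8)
The plan is to combine Proposition \ref{prop-affine-cyclic-order} and Theorem \ref{thm-affine-refl-order}, whose common engine is a crossed-product realization of $O_q(\k^{2n})$ together with the principle that the invariants of a Galois order under a finite group again form a Galois order over the invariant base. Writing the generators as $x_1,\dots,x_n,y_1,\dots,y_n$, I would first exhibit $O_q(\k^{2n})$ itself as a Galois order over the commutative polynomial subalgebra $\Lambda$ generated by the weight-zero elements $z_i = x_i y_i$. Setting $L = \Frac(\Lambda) = \k(z_1,\dots,z_n)$, conjugation by the $x_i,y_i$ inside the skew field of fractions acts on $L$ by $q$-shifts of the $z_i$, so $O_q(\k^{2n})$ embeds into the crossed product $L * \mathcal{M}$ with $\mathcal{M} \cong \Z^n$ the lattice generated by these shifts; one checks $\Lambda = O_q(\k^{2n}) \cap L$ and reads off the free monomial $\Lambda$-basis that exhibits $O_q(\k^{2n})$ as a standard Galois order over $\Lambda$.

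Next I would transport the $G$-action to this picture. In both cases $G$ acts by quantum-monomial transformations preserving $\Lambda$ and normalizing the shift lattice $\mathcal{M}$, so $G$ acts on $L * \mathcal{M}$ and $O_q(\k^{2n})^G \subseteq (L*\mathcal{M})^G$. The crucial reduction is that the induced action on $L$ factors through a genuine reflection group: for $G = G_m^{\otimes n}$ each cyclic factor scales $x_i$ and $y_i$ by inverse roots of unity and hence fixes $z_i$, so $G$ acts trivially on $L$, the invariant monomials span a sublattice $\mathcal{M}' \subseteq \mathcal{M}$, and $\Gamma := \Lambda^G = \Lambda$; for $G = G(m,p,n)$ the diagonal part again fixes every $z_i$ while the underlying permutations act on $\Lambda = \k[z_1,\dots,z_n]$ through the symmetric group $S_n$, so the effective group on $L$ is the reflection group $S_n$ and $\Gamma := \Lambda^G = \Lambda^{S_n}$. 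Having verified the hypotheses (normalization of $\mathcal{M}$, faithfulness on $L$, and the Harish-Chandra property of $\Gamma$), I would invoke the general criterion that the invariant subalgebra of a Galois order is a Galois order over the invariant base to conclude that $O_q(\k^{2n})^G$ is a Galois order over $\Gamma$.

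For the remaining assertions I would argue as follows. By the Shephard-Todd-Chevalley theorem the invariants $\Gamma = \Lambda^{S_n}$ (respectively $\Gamma = \Lambda$) form a polynomial algebra, namely the symmetric functions in the $z_i$. For freeness, Chevalley's theorem gives that $\Lambda$ is a free $\Gamma$-module, and $O_q(\k^{2n})$ is free over $\Lambda$ by its monomial basis, so $O_q(\k^{2n})$ is free over $\Gamma$; since $\chara \k \nmid |G|$ (in particular when $\k = \c$) the Reynolds operator $\tfrac{1}{|G|}\sum_{g\in G} g$ is $\Gamma$-linear and realizes $O_q(\k^{2n})^G$ as a $\Gamma$-module direct summand of $O_q(\k^{2n})$, hence as a projective $\Gamma$-module. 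Because $O_q(\k^{2n})^G$ is graded and $\Gamma$ is a connected graded polynomial algebra, this projective module is in fact free, which gives the final statement.

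The main obstacle I expect is the integrality condition built into the definition of a Galois order, as opposed to the weaker notion of a Galois ring: one must confirm that for the invariant algebra the intersections $W \cap O_q(\k^{2n})^G$ with finitely generated $L$-subspaces of $(L*\mathcal{M})^G$ remain finitely generated over $\Gamma$. This is where the base case — that $O_q(\k^{2n})$ is genuinely a Galois order over $\Lambda$ rather than merely a Galois ring — and the verification of the Harish-Chandra and monoid-normalization hypotheses for the criterion on invariants must be carried out with care. The reflection-group case $G(m,p,n)$ is the delicate one, since there the effective $S_n$-action on $L$ is nontrivial and the integrality must be shown to survive the passage to $\Lambda^{S_n}$.
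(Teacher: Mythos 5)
There is a genuine gap: your proof is built on a different group action from the one the theorem refers to, and for the paper's action the central claims of your reduction are false. The action fixed in Section 3 of the paper is $h(x_i)=g_i x_{\pi(i)}$, $h(y_i)=y_{\pi(i)}$ for $h=(g,\pi)\in G(m,p,n)$ (and $g(x_i)=g_i x_i$, $g(y_i)=y_i$ in the cyclic case): the diagonal part rescales only the $x_i$ and leaves the $y_i$ alone. You assume instead that the cyclic factors scale $x_i$ and $y_i$ by \emph{inverse} roots of unity. Consequently, for the paper's action, $h(x_iy_i)=g_i\,x_{\pi(i)}y_{\pi(i)}$, so your elements $z_i=x_iy_i$ are \emph{not} fixed by the diagonal part; $G$ does not act trivially (cyclic case) nor through $S_n$ (reflection case) on $L=\k(z_1,\ldots,z_n)$, but through its full, faithful reflection representation; and your proposed bases $\Gamma=\Lambda$, resp.\ $\Gamma=\Lambda^{S_n}$, are not even subalgebras of $O_q(\k^{2n})^G$. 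Already for $n=1$, $G=G_m$, the invariant ring is $\k\langle x^m,y\rangle\cong O_{q^m}(\k^2)$ (Proposition \ref{prop-affine-cyclic}), which does not contain $xy$; your argument would instead produce the quantum Kleinian-type algebra $\k\langle x^m,y^m,xy\rangle$, i.e.\ the invariants for a different action. The paper's proof avoids all of this by a different crossed-product realization: $O_q(\k^{2n})\cong\k[x_1,\ldots,x_n]*\N^n$ (Proposition \ref{prop-affine-cyclic-order}), with base the polynomial algebra in the $x_i$ themselves and monoid generators the images $\epsilon_i$ of the $y_i$. This isomorphism is $G$-equivariant for the paper's action; $G$ acts on the base by its defining reflection representation (so $\Gamma=\k[x_1,\ldots,x_n]^G$ is polynomial by Shephard--Todd) and on the monoid by bare permutations $\epsilon_i\mapsto\epsilon_{\pi(i)}$, with no scalar twist, which is exactly the format demanded by the definition of a Galois ring in $(L*\mathfrak{M})^G$. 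Note that in your $z$-picture even the correct action multiplies the canonical lattice generators by roots of unity ($\delta_i\mapsto g_i^{\pm 1}\delta_{\pi(i)}$), so $G$ would not act on $\mathfrak{M}$ by conjugation and the framework of \cite{FO1} would not apply until you replace the monoid generators by the images of the $y_i$ --- which essentially reconstructs the paper's setup.

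A second gap is the engine of your middle paragraph: the ``general criterion that the invariant subalgebra of a Galois order is a Galois order over the invariant base'' is not a citable statement --- it appears neither in this paper nor in \cite{FO1}, \cite{FS2}. What the paper actually does is: (i) get finite generation of the invariants (Theorem \ref{Montgomery}, Montgomery--Small, where needed); (ii) obtain the Galois \emph{ring} property from the $G$-equivariant isomorphism above (alternatively from the support criterion, Proposition \ref{prop-supp}(i)); (iii) upgrade ring to order via projectivity, Proposition \ref{prop-proj}; (iv) deduce projectivity of the invariants from Lemma \ref{lemma2} together with freeness of $\k[x_1,\ldots,x_n]$ over $\k[x_1,\ldots,x_n]^G$, and conclude freeness from \cite{Bass}, Corollary 4.5. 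Your final paragraph is a correct analogue of (iii)--(iv) (Reynolds operator, then graded projective implies free), but steps (i)--(ii) are precisely what your black box is standing in for, and they must be verified for the actual action.
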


Theorem \ref{thm-main1} can be easily generalized to the case of the quantum torus (Theorem \ref{thm-torus-refl-order}):

\begin{theorem}\label{thm-main2} 
For every $G=G(m,p,n)$ the invariant subring $O_q(\k*^{2n})^G$ of the quantum torus is 
 is a Galois order over $\Gamma=\k[x^{\pm 1}_1, \ldots, x^{\pm 1}_n]^G$ in  $\k(x_1,\ldots, x_n)*\mathbb{Z}^n)^G$. Moreover,   $O_q(\k^{2n})^G$ is free as a left (right) $\Gamma$-module.
\end{theorem}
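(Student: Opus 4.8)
The plan is to deduce Theorem \ref{thm-main2} from Theorem \ref{thm-main1} by a localization argument, exploiting that the quantum torus $O_q(\k*^{2n})$ is an Ore localization of the quantum affine space $O_q(\k^{2n})$. First I would observe that $O_q(\k*^{2n}) = O_q(\k^{2n})[S^{-1}]$, where $S$ is the multiplicative set generated by the quantum coordinate generators; these are normal elements, so $S$ is a two-sided Ore set and the localization is well-behaved. Because $G = G(m,p,n)$ permutes and scales the generators by roots of unity, a suitable power $z$ of the product $x_1\cdots x_n$ is a $G$-invariant normal element whose inversion, after passage to invariants, inverts all generators. The key structural fact is that the field $L = \k(x_1,\ldots,x_n)$ underlying the ambient skew algebra is unchanged by the localization: passing from affine space to torus only enlarges the acting shift monoid to the full group $\mathbb{Z}^n$ and enlarges the distinguished base from polynomial to Laurent invariants.

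Next I would verify that Ore localization commutes with taking $G$-invariants. Since $G$ is finite and $z$ is $G$-invariant, the invariant subring localizes compatibly, giving
\[
O_q(\k*^{2n})^G = \bigl(O_q(\k^{2n})^G\bigr)\bigl[\bar S^{-1}\bigr],
\]
where $\bar S$ is the Ore set generated by $z$ inside $O_q(\k^{2n})^G$. Correspondingly the commutative base localizes as $\Gamma = \k[x^{\pm 1}_1,\ldots,x^{\pm 1}_n]^G = \bigl(\k[x_1,\ldots,x_n]^G\bigr)\bigl[\bar S^{-1}\bigr]$, the invariant field $K = L^G$ is preserved, and the ambient invariant skew group algebra $(\k(x_1,\ldots,x_n)*\mathbb{Z}^n)^G$ is identified. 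At this point the Galois ring condition for $O_q(\k*^{2n})^G$ over $\Gamma$ follows from its validity in the affine case of Theorem \ref{thm-main1}, since the finiteness and separation conditions are stable under localization and $K$ is unchanged.

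It then remains to verify the order condition and the freeness. For the order condition, I would show that an element of $(\k(x_1,\ldots,x_n)*\mathbb{Z}^n)^G$ lies in the localized algebra exactly when it satisfies the integrality-over-$\Gamma$ condition inherited from the affine order; because we localize at the invariant normal element $z \in \Gamma$ without altering $L$, the intersection and integrality properties defining a Galois order transfer without essential change. Freeness is the most transparent part: by Theorem \ref{thm-main1}, $O_q(\k^{2n})^G$ is free as a left (right) module over $\k[x_1,\ldots,x_n]^G$, and applying the exact base change $-\otimes_{\k[x_1,\ldots,x_n]^G}\Gamma$, which is precisely localization at $\bar S$, sends a free module to a free $\Gamma$-module.

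The main obstacle I anticipate is the compatibility of the Ore localization with the $G$-action together with the stability of the order condition: one must confirm that inverting the invariant normal element $z$ genuinely recovers the full quantum torus invariants rather than a proper subalgebra, and that the maximal-order structure is not disturbed by enlarging the base from polynomial to Laurent invariants. Once this localization-invariance of the Galois order property is secured, the remainder of the argument is formal.
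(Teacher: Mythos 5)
Your overall strategy---realize the quantum torus as a localization of the quantum affine space at $G$-invariant normal elements and transfer Theorem \ref{thm-main1}---is close in spirit to the paper, but as written it has a concrete defect at the very first step: a power of $x_1\cdots x_n$ does \emph{not} invert all the generators. Under the identification $O_q(\k^{2n})\simeq \k[x_1,\ldots,x_n]*\mathbb{N}^n$ of Proposition \ref{prop-affine-cyclic-order}, the generators $y_i$ are the monoid generators $\epsilon_i$, and it is precisely their inversion that enlarges $\mathbb{N}^n$ to $\mathbb{Z}^n$; inverting powers of $x_1\cdots x_n$ alone produces $(\k[x_1^{\pm 1},\ldots,x_n^{\pm 1}]*\mathbb{N}^n)^G$, a proper subalgebra of $O_q(\k*^{2n})^G$. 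This part is repairable: $y_1\cdots y_n$ is a $G$-invariant normal element (since $h(y_i)=y_{\pi(i)}$ and the $y_i$ commute with each other), $(x_1\cdots x_n)^{m/p}$ is $G$-invariant by the definition of $A(m,p,n)$, and inverting both elements in the domain $O_q(\k^{2n})$ inverts every $x_i$ and every $y_i$; for these invariant normal elements localization does commute with taking $G$-invariants.

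The more serious gap is in the order and freeness part, and it is not repaired by the fix above. Your claim that localization at $\bar S$ ``is precisely the base change $-\otimes_{\k[x_1,\ldots,x_n]^G}\Gamma$'' is false: the element $y_1\cdots y_n$ that must be inverted does not lie in the commutative base, and $O_q(\k^{2n})^G\otimes_{\k[x_1,\ldots,x_n]^G}\k[x_1^{\pm 1},\ldots,x_n^{\pm 1}]^G$ only inverts the $x$-part, again yielding $(\k[x_1^{\pm 1},\ldots,x_n^{\pm 1}]*\mathbb{N}^n)^G$ rather than the torus invariants. Hence freeness of $O_q(\k*^{2n})^G$ over $\Gamma$ does not follow from ``free modules base-change to free modules.'' Likewise there is no general principle, in this paper or in \cite{FO1}, asserting that the Galois order condition---that $W\cap U$ is a finitely generated $\Gamma$-module for every finite dimensional $K$-subspace $W$---passes to noncommutative localizations; your ``integrality-over-$\Gamma$'' reformulation is not the definition, so this step is an assertion, not a proof. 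The paper sidesteps both problems by localizing the $G$-equivariant \emph{isomorphism} itself, obtaining $O_q(\k*^{2n})\simeq \k[x_1^{\pm 1},\ldots,x_n^{\pm 1}]*\mathbb{Z}^n$ and hence $O_q(\k*^{2n})^G\simeq (\k[x_1^{\pm 1},\ldots,x_n^{\pm 1}]*\mathbb{Z}^n)^G$, and then re-running the affine argument on the torus side: the Galois ring property is read off from supports, and the order and freeness statements come from Lemma \ref{lemma2}, Proposition \ref{prop-proj} and Bass's theorem applied to the crossed product $\k[x_1^{\pm 1},\ldots,x_n^{\pm 1}]*\mathbb{Z}^n$, which is manifestly free over $\k[x_1^{\pm 1},\ldots,x_n^{\pm 1}]$ with basis indexed by $\mathbb{Z}^n$. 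To complete your argument you would need to do essentially this; the localization shortcut by itself delivers neither the order condition nor the freeness.
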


We have the following generalization of Theorem \ref{thm-main1} for quantum planes (Theorem \ref{thm-plane-order}) and the first quantum Weyl algebra (Proposition \ref{prop-A1-order}) when $\k=\mathbb C$:

\begin{theorem}\label{thm-main3} Let $A\in \{O_q(\mathbb C^{2}),   A_1^q(\mathbb C)\}$.
For every finite group $G$ of automorphisms of  $A$, the subring of invariants  $A^G$ is a Galois order over 
a certain polynomial subalgebra  $\Gamma$ in one variable. Moreover,  $A^G$  is free as a left (right) $\Gamma$-module.
\end{theorem}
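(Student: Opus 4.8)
The plan is to reduce to the structure theory of rank-one generalized Weyl algebras together with the Galois-theoretic descent for invariants. Throughout I assume, as elsewhere in the paper, that $q$ is not a root of unity. First I would record that both algebras are rank-one generalized Weyl algebras over the polynomial ring $\mathbb{C}[z]$, where $z=yx$: for $O_q(\mathbb{C}^2)$ one has $xy=qz$ and $xz=qzx$, and for $A_1^q(\mathbb{C})$ one has $xy=qz+1$. In each case $z$ generates a polynomial subalgebra, $x$ and $y$ implement an infinite-order automorphism $\sigma$ of $\mathbb{C}[z]$ (namely $z\mapsto qz$), and the defining element $xy$ equals $qz$ (respectively $qz+1$). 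By Bavula's theorem \cite{Bavula}, $A$ is then a Galois order over $\mathbb{C}[z]$, sitting inside the skew group algebra $L*\langle\sigma\rangle$ with $L=\mathbb{C}(z)$ and $\langle\sigma\rangle\cong\mathbb{Z}$.

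Next I would classify the finite groups of automorphisms. For $q$ not a root of unity the automorphism groups are known to be tori: $\mathrm{Aut}(O_q(\mathbb{C}^2))=(\mathbb{C}^*)^2$ acting by $x\mapsto\alpha x$, $y\mapsto\beta y$, while $\mathrm{Aut}(A_1^q(\mathbb{C}))=\mathbb{C}^*$ acting by $x\mapsto\alpha x$, $y\mapsto\alpha^{-1}y$ (the relation $xy-qyx=1$ forcing $\alpha\beta=1$). Hence every finite $G$ is a finite abelian group acting diagonally through roots of unity, and after replacing $G$ by its image in the torus I may assume $G$ is a finite subgroup of the diagonal torus. Such a $G$ preserves the generalized Weyl grading and acts on $z=yx$ through the single character $\mu=\chi_x\chi_y$; let $d$ be the order of its image. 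The candidate base is $\Gamma:=\mathbb{C}[z]^G=\mathbb{C}[z^d]$, which is polynomial in one variable. In the Weyl case $\mu=1$, so $\Gamma=\mathbb{C}[z]$.

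The core of the argument is to descend the Galois order structure to $A^G$. Since $G$ preserves $\mathbb{C}[z]$ and the generalized Weyl $\mathbb{Z}$-grading, its action extends to $L*\langle\sigma\rangle$, scaling $L=\mathbb{C}(z)$ via $z\mapsto\mu(g)z$ and twisting the degree-$k$ component by the character $\chi_x(g)^k$. The invariant algebra $(L*\langle\sigma\rangle)^G$ is a standard Galois algebra over $K:=\mathbb{C}(z)^G=\mathbb{C}(z^d)=\mathrm{Frac}(\Gamma)$, and $A^G$ embeds in it. I would first check that $A^G$ is a Galois ring over $\Gamma$ in the sense of \cite{FO1}: that $K\cdot A^G=A^G\cdot K=(L*\langle\sigma\rangle)^G$ and $A^G\cap K=\Gamma$. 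I would then upgrade this to the Galois order property by verifying the finiteness (adequacy) conditions of \cite{FO2}, i.e. that for each $u\in A^G$ the $\Gamma$-bimodule $\Gamma u\Gamma$ is finitely generated on each side; this follows from the finite generation of $A$ over $\mathbb{C}[z]$ together with the finiteness of $\mathbb{C}[z]$ over $\Gamma$. A structural warning is that $A^G$ need not itself be a generalized Weyl algebra: for a genuinely two-parameter diagonal action the monoid of invariant monomials may require several Hilbert-basis generators, so Bavula's criterion cannot simply be reapplied to $A^G$, and the Galois algebra realization is what makes the descent work. In the special cases $A=A_1^q(\mathbb{C})$, or $G$ cyclic with $\mu$ generating the ambient character, $A^G$ is again a rank-one generalized Weyl algebra over $\Gamma$ and the result is immediate from \cite{Bavula}.

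Finally, for freeness I would use the generalized Weyl $\mathbb{Z}$-grading $A=\bigoplus_{k\in\mathbb{Z}}A_k$, which is $G$-stable, so that $A^G=\bigoplus_k(A_k)^G$. Each $A_k$ is free of rank one over $\mathbb{C}[z]$, and a direct character computation shows that $(A_k)^G$ has the form $z^{r_k}\mathbb{C}[z^d]\,x^{k}$ (resp. with $y^{-k}$ for $k<0$), hence is free of rank at most one over $\Gamma=\mathbb{C}[z^d]$ as both a left and a right $\Gamma$-module. Collecting these rank-one generators yields a $\Gamma$-basis of $A^G$ on either side. I expect the main obstacle to be the descent step of the third paragraph: pinning down the ambient invariant Galois algebra and, above all, verifying the order (finiteness) axioms for $A^G$ directly, precisely because $A^G$ is not in general a generalized Weyl algebra; by contrast the automorphism classification and the freeness computation are comparatively routine once the diagonal form of $G$ is in hand.
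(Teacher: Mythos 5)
Your route is genuinely different from the paper's, and it contains a real gap at exactly the step you flag as the main obstacle. For comparison: the paper never works over $\mathbb{C}[yx]$. It localizes at $x$, uses Alev--Chamarie (resp.\ Alev--Dumas) to put $G$ inside the diagonal torus, and then quotes Dumas's computation (Proposition \ref{prop-Dumas}) that $(\mathbb{C}_q[x,y]_x)^G$ \emph{is} the Ore extension $\mathbb{C}(x^m)[v;\sigma]$ with $v=x^ky^l$ and $\sigma(x^m)=q^nx^m$. This hands the paper an honest skew monoid ring $\mathbb{C}(x^m)*\mathbb{N}$ with $L=K$ and trivial Galois group, so Proposition \ref{prop-supp} applies immediately with $\Gamma=\mathbb{C}[x^m]$, and freeness follows from Proposition \ref{prop-proj}, Lemma \ref{lemma2} and Bass; the quantum Weyl case is reduced to the quantum plane by localizing at $x$ (Proposition \ref{prop-A1-order}), not by a separate generalized Weyl argument. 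Your approach instead keeps $\Gamma=\mathbb{C}[z]^G$, $z=yx$, and tries to descend the Galois order structure of $A\subset\mathbb{C}(z)*\mathbb{Z}$ to invariants.

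The gap: your assertion that $(L*\langle\sigma\rangle)^G$ ``is a standard Galois algebra over $K=\mathbb{C}(z)^G$'' is precisely what needs proof, and it is not routine, because the $G$-action you construct on $\mathbb{C}(z)*\mathbb{Z}$ is \emph{not} of the type covered by the Futorny--Ovsienko formalism (Galois group acting on $L$ plus conjugation on the monoid). Elements of $\ker\mu$ act trivially on $L=\mathbb{C}(z)$ but nontrivially on the ring, via the character twist $\chi_x(g)^k$ in degree $k$. Consequently (i) the invariants are supported only on the subgroup of degrees $k$ with $\chi_x^k\in\langle\mu\rangle$ --- e.g.\ for $G=G_m\times G_m$ acting through independent characters of order $m$, only degrees in $m\mathbb{Z}$ survive --- and (ii) on a surviving degree $k$ the invariant component is $z^{r_k}\mathbb{C}(z^d)\sigma^k$ with $r_k\neq 0$ in general, so $(\mathbb{C}(z)*\mathbb{Z})^G$ is a crossed product of $K=\mathbb{C}(z^d)$ by that subgroup with a nontrivial $K^*$-valued $2$-cocycle, not on its face a skew group ring. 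It is in fact isomorphic to $K*\mathbb{Z}$: take the minimal surviving degree $e$, set $u=z^{r_e}\sigma^e$, and use the powers $u^k$ as the new monoid representatives (the untwisting works because the group is infinite cyclic). But this untwisting argument is the actual content of the descent, and without it neither the definition of a Galois ring nor Proposition \ref{prop-supp} can be applied to $A^G$. Two further corrections: your justification of the order axiom is wrong as stated, since $A$ is \emph{not} finitely generated as a $\mathbb{C}[z]$-module, only as a $\mathbb{C}[z]$-ring; what is true, and sufficient, is that each graded component $A_k$ is free of rank one over $\mathbb{C}[z]$ and $\mathbb{C}[z]$ is finite free over $\Gamma$ --- equivalently, once your (correct, and nicely explicit) freeness computation $A^G=\bigoplus_k z^{r_k}\mathbb{C}[z^d]v_k$ is in place, you can simply invoke Proposition \ref{prop-proj} as the paper does. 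Also, with the paper's convention $yx-qxy=1$ the generalized Weyl automorphism for $A_1^q(\mathbb{C})$ is $\sigma(z)=q^{-1}(z-1)$, not $z\mapsto qz$; this slip is harmless since only the infinite order of $\sigma$ is used.
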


It was shown in \cite{FS1} that  $A_n(\k)^{S_n}$ is a Galois order over some polynomial algebra.  We prove the quantum analog of this result for $A_n^{{q}}(\k)^{S_n}$ (Theorem \ref{thm-quantum-order}). 

In Section \ref{QGKC} we address the quantum Gelfand-Kirillov conjecture for various algebras.  We introduce  a class of quantum linear Galois algebras and show that 
the quantum Gelfand-Kirillov conjecture is valid in this class (Theorem \ref{thm-qlga}). Quantum linear Galois algebras include the quantum Orthogonal Gelfand-Zetlin algebras of type $A$ 
(in particular, the universal enveloping algebra $U_q(gl_n)$ and the quantum Heisenberg Lie algebra),    $O_q(\k^{2n})^G$ and $O_q(\k*^{2n})^G$ for $G=G(m,p,n)$, 
 $A_n^{{q}}(\k)^{S_n}$. When $n=1$ the group $G$ in all cases can be arbitrary. 
 
 We also compute the skew fields of fractions for the quantum $2$-sphere and for the quantum group $O_{q^2}(so(3,\mathbb{C}))$. Finally, we show that the subalgebra of $G_m$-invariants of$U(sl_2)$ for the cyclic group $G_m$ of order $m$ is birationally equivalent to $U(sl_2)$ in spite of the rigidity of the latter.

\

\noindent{\bf Acknowledgements.}  V.F. is
supported in part by  CNPq grant (200783/2018-1) and by 
Fapesp grant (2014/09310-5).  J.S. is supported in part by Fapesp grants (2014/25612-1)  and  (2016/14648-0).

\section{Preliminaries}
All rings and fields in the paper are assumed to be $\k$-algebras over an algebraically closed field $\k$ of characteristic $0$.

For $q\in \k$ we denote by $\k_q[x,y]$  the  \emph{quantum plane}  over $\k$ is defined as $\k\langle x,y\mid
yx=qxy\rangle$.    In this paper we will always assume that $q$ is not a root of unity.
%Let $\bar q=(q_1,\ldots,q_n)\in(\k\backslash\{0\})^n$. 
Let $\overline{q}=(q_1, \ldots, q_n) \in \k^n$ be an $n$-tuple whose components are  non zero  and  non roots of unity.
 The tensor product of quantum planes $k_{q_1}[x_1,y_1] \otimes \ldots \otimes \k_{q_n}[x_n,y_n]$ will be called \emph{quantum affine space} and will be denoted by $O_{\overline{q}}(\k^{2n})$. If  $q_1= \ldots = q_n=q$, we will use the notation $O_q(\k^{2n})$.

Denote by $A_1^q(\k)$
 the first quantum Weyl algebra  defined as  $\k\langle x,y\mid
yx-qxy=1\rangle$ and set $$A_n^{\overline{q}}(\k)=A_1^{q_1}(\k)\otimes_\k \cdots \otimes_\k A_1^{q_n}(\k)$$ for any positive integer $n$.
 Again, if $q_1= \ldots = q_n=q$ then we simply denote it by $A_n^{{q}}(\k)$.

The quantum affine space $O_{\overline{q}}(\k^{2n})$ and the quantum Weyl algebra $A_n^{\overline{q}}(\k)$ are birationally equivalent, that is they have isomorphic skew fields of fractions
\cite{BG}.

\subsection{Galois orders}
We recall the concepts of Galois rings and Galois orders from \cite{FO1}. Let $\Gamma$ be a commutative domain and $K$  the field of fractions of $\Gamma$.
Let  $L$ be a finite Galois extension of  $K$ with the Galois group $G=Gal(L,K)$, $\mathfrak{M}\subset Aut_\k \, L $  a monoid 
satisfying the following condition: if $m,m' \in \mathfrak{M}$ and their restrictions to $K$ coincide, then $m=m'$. Consider the action of $G$ on $\mathfrak{M}$ by conjugation.

A finitely generated $\Gamma$-ring $U$ in $(L*\mathfrak{M})^G$ is called a \emph{Galois ring over $\Gamma$} if $KU=KU=(L*\mathfrak{M})^G$.
A Galois ring over $\Gamma$ is called a \emph{right (left) Galois order over $\Gamma$} if for every right (left) finite dimensional $K$-vector subspace $W \subset \mathfrak{K}$, $W \cap \Gamma$ is a finitely generated right (left) $\Gamma$-module.
If $U$  is both left and right Galois order over $\Gamma$, then we say that $U$ is a \emph{Galois order over $\Gamma$}.

If  $x = \sum_{m \in \mathfrak{M}} x_m m\in L*\mathfrak{M}$ then set $$supp \, x =\{m\in \mathfrak{M} | , x_m\neq 0 \}.$$

We have 

\begin{proposition} \cite{FO1} \label{prop-supp}
Let $\Gamma\subset U$ be a commutative domain and  $U\subset (L*\mathfrak{M})^G$.
\begin{itemize}
\item[(i)]
If $U$ is generated by $u_1,\ldots, u_k$ as a $\Gamma$-ring and
 $\bigcup_{i=1}^k supp \, u_i$ generates $\mathfrak{M}$ as a monoid, then $U$ is a Galois ring  over $\Gamma$.
\item[(ii)]
Let $U$ be a Galois ring over $\Gamma$ and $S=\Gamma\setminus \{0\}$. Then $S$ is a left and right Ore  set, and the localization of $U$ by $S$ both on the left and on the right is isomorphic to $(L*\mathfrak{M})^G$.
\end{itemize}
\end{proposition}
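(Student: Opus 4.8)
The plan is to treat the two parts separately, observing first that both reduce to understanding the ring $R:=(L*\mathfrak{M})^{G}$, inside which $\Gamma\subseteq U$ and $K=\mathrm{Frac}(\Gamma)$ sit via the identity $e\in\mathfrak{M}$; indeed $K=L^{G}\subseteq R$ since $g(xe)=g(x)e$ for $g\in G$. Because every $s\in S:=\Gamma\setminus\{0\}$ is already invertible in $R$ (its inverse lies in $K\subseteq R$) and $\Gamma$ is a commutative domain, clearing denominators shows that, as subsets of $R$, one has $KU=\{s^{-1}u:s\in S,\ u\in U\}$ and $UK=\{us^{-1}\}$, both visibly a left (resp. right) $K$-subspace of $R$ containing $U$. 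Hence the entire content of (i) is the reverse inclusion $R\subseteq KU$ (and, symmetrically, $R\subseteq UK$), while (ii) is essentially formal once the Galois-ring identity is in hand.

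For (i) I would first record the structure of $R$ as a left $K$-module. An element $x=\sum_{m}x_{m}m$ is $G$-invariant iff $g(x_{g^{-1}mg})=x_{m}$ for all $g,m$, so its support is a union of conjugation orbits and the coefficients on each orbit are Galois translates of a single one; thus $R=\bigoplus_{\mathcal{O}}R_{\mathcal{O}}$ over the $G$-orbits $\mathcal{O}\subseteq\mathfrak{M}$, where evaluation $x\mapsto x_{m_{0}}$ at a representative $m_{0}$ with stabilizer $H=\mathrm{Stab}_{G}(m_{0})$ identifies $R_{\mathcal{O}}$ with the finite extension $L^{H}$ of $K$. Next I would use that each $u_{i}$ is $G$-invariant, so $\supp u_{i}$ is $G$-stable; hence the generating set $\mathcal{G}=\bigcup_{i}\supp u_{i}$ is $G$-stable and the associated word length $\ell:\mathfrak{M}\to\mathbb{Z}_{\ge 0}$ is constant on orbits. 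Then I would prove by induction on $r$ that $\bigoplus_{\ell(\mathcal{O})\le r}R_{\mathcal{O}}\subseteq KU$. The base case $r=0$ gives $R_{\{e\}}=K\subseteq KU$. For the step, given a length-$r$ orbit with $m_{0}=s_{1}\cdots s_{r}$, $s_{j}\in\mathcal{G}\cap\supp u_{i_{j}}$, the product $P=u_{i_{1}}\cdots u_{i_{r}}\in U\subseteq KU$ has support in $\supp u_{i_{1}}\cdots\supp u_{i_{r}}\subseteq\{\ell\le r\}$, its part of length $<r$ lies in $KU$ by induction, and its top-length part (invariant, being the highest-length component of the invariant $P$) is therefore also in $KU$.

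The main obstacle is exactly realizing a prescribed coefficient: because $\supp(u_{i}v)\subseteq\supp u_{i}\cdot\supp v$ may be a proper inclusion due to cancellation in $L$, one must show that the "leading-coefficient" map sending a length-$r$ product (with inserted $\Gamma$-coefficients), followed by $K$-linear combination, surjects onto $R_{\mathcal{O}}\cong L^{H}$, so that every element of $R_{\mathcal{O}}$ is attained. This is where the separation hypothesis on $\mathfrak{M}$ (distinct elements restrict differently to $K$) is essential: it guarantees that the monoid elements are genuinely independent over $L$ and that passage to $G$-invariants does not collapse the coefficient data, so the leading map is onto $L^{H}$ and the induction closes, giving $R\subseteq KU$; the right-handed argument is identical and yields $R\subseteq UK$. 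Together with the trivial inclusions this proves $KU=UK=(L*\mathfrak{M})^{G}$, i.e. (i).

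For (ii) I would argue as follows. Each $s\in S$ is a non-zero-divisor in $L*\mathfrak{M}$: left multiplication by $s$ is injective on the free left $L$-module with basis $\mathfrak{M}$ since $s\ne 0$, and right multiplication is injective since $(\sum x_{m}m)s=\sum x_{m}m(s)m$ with $m(s)\ne 0$; hence $s$ is regular in $U$. To verify the right Ore condition, take $u\in U$, $s\in S$; using the Galois-ring identity $R=UK=\{u_{1}s_{1}^{-1}\}$ I can write $s^{-1}u=u_{1}s_{1}^{-1}$ in $R$, which rearranges to $u s_{1}=s u_{1}$ with $u_{1}\in U$, $s_{1}\in S$; the left condition follows symmetrically from $R=KU$. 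Finally, since every element of $S$ is invertible in $R$, the universal property of Ore localization yields a ring homomorphism $S^{-1}U\to R$, surjective because $R=S^{-1}U$ as a subset of $R$ and injective because $s^{-1}u\mapsto 0$ forces $u=0$ and hence $s^{-1}u=0$ in $S^{-1}U$. Thus $S^{-1}U\cong R=(L*\mathfrak{M})^{G}$, and the same computation on the right gives $US^{-1}\cong R$, which is the assertion.
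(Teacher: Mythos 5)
The paper itself offers no proof of this proposition (it is imported from [FO1]), so the only question is whether your argument stands on its own. Part (ii) does: the regularity of elements of $S$ in $L*\mathfrak{M}$, the derivation of both Ore conditions from $KU=UK=(L*\mathfrak{M})^G$ by clearing denominators (using that $U$ is a $\Gamma$-ring), and the identification $S^{-1}U\cong (L*\mathfrak{M})^G$ via the universal property are all sound. Your skeleton for part (i) — the orbit decomposition $R=\bigoplus_{\mathcal O}R_{\mathcal O}$ with $R_{\mathcal O}\cong L^{H}$, induction on word length, passing to the top-length component of an invariant product — is also the right frame.

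But part (i) has a genuine gap: the entire mathematical content of the statement sits in the step you label "the main obstacle" and then dismiss in one sentence, and your stated reason ("separation guarantees that the monoid elements are genuinely independent over $L$") is not what separation does — the elements of $\mathfrak{M}$ are an $L$-basis of $L*\mathfrak{M}$ by construction, so $L$-independence is free. What is actually missing are two lemmas. \emph{Non-vanishing:} for $m_0=s_1\cdots s_r$ one must exhibit some $\Gamma$-interspersed product $u_{i_1}\gamma_1u_{i_2}\cdots\gamma_{r-1}u_{i_r}\in U$ with $m_0\in\supp$ of it; this follows because distinct factorizations of $m_0$ give distinct tuples of prefix products, whose restrictions to $K$ are distinct embeddings (here separation enters), hence the corresponding multiplicative functionals in $(\gamma_1,\dots,\gamma_{r-1})$ are linearly independent by Dedekind--Artin and the $m_0$-coefficient cannot vanish identically on $\Gamma^{r-1}$. \emph{Extraction:} given an invariant $x$ with $m_0\in\supp x$, one must prove $R_{[m_0]}\subseteq KU$, i.e.\ that the left-$K$, right-$\Gamma$ bimodule generated by $x$ contains the whole orbit component. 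This is the heart of [FO1]'s argument: the restrictions $m|_K$, $m\in\supp x$, are pairwise distinct embeddings $K\to L$ (separation again), so inside the finite-dimensional $K$-algebra $\prod_{m\in\supp x}L$ the span $V$ of the vectors $(m(\gamma))_m$, $\gamma\in\Gamma$, contains the idempotent cutting out exactly $[m_0]\cap\supp x$ (two coordinates have equal maximal ideals $\ker(V\to L)$ precisely when they are $G$-conjugate, using that $K$-embeddings extend to elements of $G$), and $K\cdot m_0(K)=L^{H}$ by the Galois correspondence, which is what makes your "leading map" surjective onto $R_{[m_0]}\cong L^{H}$. Note also that the right-hand coefficients must be kept in $\Gamma$ (invertibility of $(m(\delta))_m$ inside the finite-dimensional algebra $V$ makes this possible); if you allow right coefficients in $K$ your induction only lands in $KUK$, and $KUK=R$ does not formally yield $KU=R$. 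As written, your part (i) states the difficulty rather than resolving it.
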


We also recall the following characterization of Galois orders.

\begin{proposition} \cite{FO1} \label{prop-proj}
Let $\Gamma$ be a commutative Noetherian domain with the field of fractions $K$. 
If $U$ is a Galois ring over $\Gamma$ and $U$ is a left (right) projective $\Gamma$-module, then $U$ is a left (right) Galois order over $\Gamma$.
%If $U$ is a Galois order over  $\Gamma$  then $\Gamma$ is a Harish-Chandra subalgebra.????????????????????????
\end{proposition}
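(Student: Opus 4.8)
The plan is to verify the defining condition of a left Galois order directly (the right case being symmetric): for an arbitrary finite-dimensional left $K$-subspace $W$ of the skew field $\mathfrak{K}=(L*\mathfrak{M})^G$, I would show that $N:=W\cap U$ is a finitely generated left $\Gamma$-module.

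First I would establish that $N$ is torsion-free of finite rank. Since $\Gamma$ is a domain and $U$ is left projective, $U$ is torsion-free, hence so is the submodule $N$. This makes the natural map $K\otimes_\Gamma N\to\mathfrak{K}$ injective: a vanishing combination $\sum_i k_in_i=0$ with $k_i\in K$ and $n_i\in N$ becomes, after multiplying by a common denominator $0\neq\gamma\in\Gamma$, a genuine $\Gamma$-linear relation $\sum_i(\gamma k_i)n_i=0$ among the $n_i$, which forces $\sum_i k_i\otimes n_i=0$. Because $N\subseteq W$ and $W$ is a $K$-subspace, the image of $K\otimes_\Gamma N$ lands in $W$, so $\mathrm{rank}\,N=\dim_K(K\otimes_\Gamma N)\le\dim_K W<\infty$.

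Next I would use projectivity to reduce to a free module. Realizing $U$ as a direct summand of a free left $\Gamma$-module yields an embedding $U\hookrightarrow F=\Gamma^{(I)}$, and hence $N\hookrightarrow F$. As $K$ is flat over $\Gamma$, the induced map $K\otimes_\Gamma N\to K\otimes_\Gamma F=K^{(I)}$ stays injective, so the $K$-span $S$ of $N$ inside $K^{(I)}$ has dimension $\mathrm{rank}\,N<\infty$. Since elements of $F=\Gamma^{(I)}$ are finitely supported, a finite-dimensional subspace $S$ is supported on a finite set $J\subseteq I$ (take $J$ to be the union of the supports of a basis of $S$). Therefore $N\subseteq S\cap F\subseteq\Gamma^{J}$, a finitely generated free module over the Noetherian ring $\Gamma$; hence $N$ is finitely generated, which is exactly the condition defining a left Galois order.

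The crux — and the only step where projectivity is truly needed — is the embedding of $U$ into a free module: a finite-rank torsion-free $\Gamma$-module need not be finitely generated in general (localizations of $\Gamma$ inside $K$ already furnish rank-one counterexamples), and it is precisely the finite-support structure of a free module that converts the rank bound into finite generation. The remaining ingredients (torsion-freeness, flatness of $K$, and the Noetherian property) are routine, and the right Galois order statement follows verbatim with left modules replaced by right modules throughout.
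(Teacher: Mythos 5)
Your proof is correct and complete. Note that the paper itself gives no argument for this proposition --- it is quoted from \cite{FO1} --- so the only meaningful comparison is with the source, and your proof is essentially the standard one given there: use projectivity to embed $U$ (hence $N=W\cap U$) in a free module $\Gamma^{(I)}$, bound $\mathrm{rank}\,N$ by $\dim_K W$ via torsion-freeness and flatness of $K$ over $\Gamma$, observe that a finite-dimensional $K$-span inside $K^{(I)}$ lives in finitely many coordinates so that $N\subseteq\Gamma^{J}$ with $J$ finite, and conclude by Noetherianity. You also silently and correctly repaired the paper's statement of the definition, which says ``$W\cap\Gamma$'' where it must mean $W\cap U$ (as in \cite{FO1}); taken literally, $W\cap\Gamma$ would be an ideal of the Noetherian ring $\Gamma$ and the condition would be vacuous.
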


%\begin{proposition} \cite{Futorny1}\label{prop-HC}
%If $U$ is a Galois Order over a finitely generated Noetherian $\Gamma$, then the subalgebra is Harish-Chandra.
%\end{proposition}

\begin{remark}\label{rem-Ore}
Let $D$ be a  commutative domain, finitely generated as a $k$-algebra, $\sigma \in Aut_\k \, D$ and $A=D[x; \sigma]$  the skew
    polynomial  Ore extension, where  $x d=\sigma(d)x$, for all $d\in
    D$. Then $D[x; \sigma]\simeq D* \mathcal M$, where  $$\mathcal M =\{\sigma^n\mid n=0, 1,
    \ldots\}    \simeq \mathbb N.$$
The isomorphism is identity on $D$ and sends $x$ to  the generator $\overline{1}$ of the monoid $\mathbb N$ and  $\overline{1}$ acts on $D$ as $\sigma$. 
Then for $L=K$, the field of fractions of $D$ and for $ G=\{e\}$ we have that the algebra $A$ is a Galois
    ring (order) over $D$ in $K*\mathcal M$.
The localization of $A$ by $x$ is isomorphic to $D*\mathbb{Z}$. 
\end{remark}

\subsection{Invariant subalgebras}

We will use the following two results on the subalgebras of invariants in the non commutative setting. The first is the result of Montgomery and Small which generalizes the Hilbert-Noether theorem.

\begin{theorem}\label{Montgomery}
Let $A$ be a commutative Noetherian ring, and $R \supset A$ an overring such that $A$ is central and $R$ is a finitely generated $A$-algebra. Let $G$ be a finite group of $A$-algebra automorphisms of $R$ such that $|G|^{-1} \in R$.  If $R$ is left and right Noetherian then $R^G$ is a finitely generated $A$-algebra.
\end{theorem}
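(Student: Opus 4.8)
The plan is to pass to the skew group algebra and then use Morita theory together with the Noetherian hypothesis to extract finitely many algebra generators of $R^G$. First I would form the skew group algebra $S = R * G$, in which $g\,r = g(r)\,g$ for $g \in G$ and $r \in R$. Since $A$ is central in $R$ and is fixed by $G$, it remains central in $S$; and $S$ is affine over $A$, being generated by a finite generating set of $R$ together with the finitely many elements of $G$. Moreover $S$ is free of rank $|G|$ as a left and as a right $R$-module, so it is left and right Noetherian because $R$ is. Thus $S$ is an affine, left and right Noetherian $A$-algebra with $A$ central.

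Next I would introduce the averaging element $e = |G|^{-1}\sum_{g\in G} g \in S$, which is available precisely because $|G|^{-1} \in R$; it is an idempotent with $ge = eg = e$ for all $g\in G$. A direct computation gives $e\,r\,e = \pi(r)\,e$ for $r \in R$, where $\pi(r) = |G|^{-1}\sum_{g\in G} g(r)$ is the Reynolds operator, whence $eSe = eRe = R^G e$ and the map $r \mapsto r e$ is an isomorphism of $A$-algebras from $R^G$ onto $eSe$. Because $|G|^{-1}\in R$, the trace $R \to R^G$, $r \mapsto \sum_{g} g(r)$, is an $R^G$-bimodule map whose image contains $1$, hence is surjective; this is exactly the condition that $e$ be a \emph{full} idempotent, i.e. $SeS = S$. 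Consequently $S$ and $eSe \cong R^G$ are Morita equivalent, and in particular one recovers that $R^G$ is Noetherian.

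It then remains to establish the crux: a full corner $eSe$ of an affine, Noetherian $A$-algebra $S$ (with $A$ central Noetherian) is again affine over $A$. Fixing a finite generating set $x_1,\dots,x_n$ of $S$ over $A$ and writing $1 = \sum_{j=1}^{m} u_j e v_j$ from $SeS = S$, the idea is to insert copies of this resolution of the identity into an arbitrary element $e s e \in eSe$, thereby expressing it through a fixed finite family of ``bricks'' of the form $e\,w\,e$, where $w$ is a word in the $x_k, u_j, v_j$; the Noetherian hypothesis is what guarantees that only finitely many bricks are required, by forcing the relevant one-sided $eSe$-modules (such as $eS$ and $Se$ viewed over $eSe$) to be finitely generated, after which an Artin--Tate-type argument adapted to the Morita context packages them into finitely many algebra generators. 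I expect this last step to be the main obstacle, since affineness is not a formal consequence of Morita equivalence alone: the Noetherian hypothesis on $R$, equivalently on $S$, must be used in an essential way, precisely to convert the fullness of $e$ into finite generation of $eSe$ \emph{as an algebra}. Transporting the conclusion back along the isomorphism $R^G \cong eSe$ then completes the proof.
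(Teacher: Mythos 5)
The paper itself offers no proof of this statement -- it is quoted verbatim from Montgomery--Small (Bull.\ London Math.\ Soc.\ 13 (1981), 33--38) -- so your attempt must be measured against that proof. Your setup is the same as theirs: $S=R*G$ is affine and Noetherian, $e=|G|^{-1}\sum_{g}g$ is idempotent, $ere=\pi(r)e$, and $eSe=R^Ge\cong R^G$; all of that is correct. The fatal step is the claim that surjectivity of the trace map ``is exactly the condition that $e$ be a full idempotent, i.e.\ $SeS=S$.'' This is false. Trace surjectivity and fullness of $e$ are the surjectivity conditions of the \emph{two different} pairing maps of the Morita context relating $S$ and $R^G$ (namely $R\otimes_{S}R\to R^G$ and $R\otimes_{R^G}R\to S$, $r\otimes r'\mapsto rer'$); the hypothesis $|G|^{-1}\in R$ gives the first for free, but not the second. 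Concretely, let $R=\mathbb{C}[x]$, $G=\mathbb{Z}/2$ acting by $\sigma(x)=-x$. The assignment $\chi(x)=0$, $\chi(\sigma)=-1$ defines an algebra map $\chi\colon S\to\mathbb{C}$ (it respects $\sigma x=-x\sigma$ and $\sigma^2=1$), and $\chi(e)=0$, so $SeS\subseteq\ker\chi\neq S$: $e$ is not full, and $S$ is not Morita equivalent to the commutative ring $R^G=\mathbb{C}[x^2]$ (a ring Morita equivalent to it would be a matrix ring over a commutative ring and would have no one-dimensional representation killing a nonzero idempotent). The failure is completely general: whenever $R$ has a $G$-stable maximal ideal $\mathfrak{m}$ with $R/\mathfrak{m}=\mathbb{C}$ -- e.g.\ the augmentation ideal for any linear action of a nontrivial group, in particular for every group action appearing in this paper -- one has $S/\mathfrak{m}S\cong\mathbb{C}G$, and the image of $e$ generates a proper ideal there. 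So the resolution $1=\sum_j u_jev_j$ never exists in the cases of interest, and everything you build on it collapses. (A secondary irony: had $e$ been full, the step you flag as ``the main obstacle'' would be no obstacle at all -- inserting $1=\sum_j u_jev_j$ between the letters of each word in the generators exhibits $eSe$ as affine with \emph{no} Noetherian hypothesis; the Noetherian hypothesis is needed elsewhere.)

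For comparison, the actual Montgomery--Small argument uses the idempotent but never fullness. First, for an \emph{arbitrary} idempotent $e$ in a right Noetherian ring $S$, every right $eSe$-submodule $N\subseteq Se$ satisfies $(NS)e=N$ (because $nse=n(ese)$ for $n=ne$), so the lattice of such submodules embeds into the lattice of right ideals of $S$; hence $Se$ is a Noetherian right $eSe$-module and $eSe$ is right Noetherian. Since $Se=Re\cong R$ as right $R^G\cong eSe$-modules, this shows $R$ is a finitely generated right $R^G$-module and $R^G$ is Noetherian -- this is where ``$R$ Noetherian'' is used. Second, affineness comes from an Artin--Tate argument closed off by the Reynolds operator: write $R=\sum_{i}w_iR^G$ with $w_1=1$, express the algebra generators $x_k$ of $R$ and the products $x_kw_j$ as $\sum_i w_i(\cdot)$ with coefficients in $R^G$, and let $B_0\subseteq R^G$ be the affine $A$-subalgebra generated by those finitely many coefficients; then $\sum_i w_iB_0$ contains $1$ and the $x_k$ and is stable under left multiplication by $A$ and by each $x_k$, hence equals $R$. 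Now apply $\pi=|G|^{-1}\mathrm{tr}$, which is a right $B_0$-linear retraction of $R$ onto $R^G$, to get $R^G=\pi(R)=\sum_i\pi(w_i)B_0$, so $R^G$ is affine over $A$. In short, the averaging operator you introduced only to manufacture $e$ is the real engine of the proof -- as a bimodule retraction in Artin--Tate -- whereas the Morita equivalence you want to run the proof through genuinely does not exist.
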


The following connects the projectivity of subalgebras of invariants with the projectivity of the algebra itself as modules over respective commutative subalgebras. 

\begin{lemma} \label{lemma2}\cite{FS2} Let $U$ be an associative algebra and $\Gamma\subset U$ a Noetherian commutative subalgebra.   Let $H$ be a finite group of automorphisms of $U$ such that $H(\Gamma)\subset \Gamma$.
If $U$ is projective right (left) $\Gamma$-module and $\Gamma$ is projective over 
 $\Gamma^H$, then $U^H$ is
projective right (left) $\Gamma^H$-module.
\end{lemma}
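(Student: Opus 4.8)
The plan is to exhibit $U^H$ as a direct summand of $U$ in the category of right (respectively left) $\Gamma^H$-modules, and then to propagate projectivity along the tower $\Gamma^H \subset \Gamma \subset U$. The argument is essentially formal once the averaging operator is shown to be $\Gamma^H$-linear, which is the single point requiring care.

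First I would record that projectivity transfers through the tower. Since $U$ is projective as a right $\Gamma$-module, it is a direct summand of a free $\Gamma$-module; since $\Gamma$ is projective over $\Gamma^H$, every free $\Gamma$-module is in turn a direct summand of a free $\Gamma^H$-module. Composing these, $U$ is a direct summand of a free right $\Gamma^H$-module, hence $U$ is itself projective as a right $\Gamma^H$-module. (All of this uses only that projectivity means ``direct summand of a free module'' and is preserved under direct sums.)

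Next I would introduce the Reynolds operator. Because $\k$ has characteristic zero and $H$ is finite, $|H|^{-1} \in \k$, so we may define $e = \tfrac{1}{|H|}\sum_{h \in H} h \colon U \to U$. A direct check shows $e$ is idempotent with image exactly $U^H$ and that $e$ is the identity on $U^H$, so that $e$ splits the inclusion $U^H \hookrightarrow U$ at the level of $\k$-vector spaces. The crucial step, on which the whole lemma turns, is that $e$ is a morphism of right $\Gamma^H$-modules. For $\gamma \in \Gamma^H$ we have $h(\gamma) = \gamma$ for all $h \in H$ (here the hypothesis $H(\Gamma) \subset \Gamma$ is exactly what lets us speak of the $H$-action on $\Gamma$ and of $\Gamma^H$), and since each $h$ is an algebra automorphism, $e(u\gamma) = \tfrac{1}{|H|}\sum_h h(u)h(\gamma) = \left(\tfrac{1}{|H|}\sum_h h(u)\right)\gamma = e(u)\gamma$.

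Thus $e$ is a right $\Gamma^H$-linear retraction of the inclusion $U^H \hookrightarrow U$, so $U^H$ is a direct summand of $U$ as a right $\Gamma^H$-module. Since $U$ is projective over $\Gamma^H$ by the first step, and a direct summand of a projective module is projective, we conclude that $U^H$ is projective as a right $\Gamma^H$-module; the left-module case is verbatim the same with left multiplications. I do not anticipate a genuine obstacle here: the only non-formal ingredient is the $\Gamma^H$-linearity of $e$, and even that reduces to the fact that elements of $\Gamma^H$ are $H$-fixed together with the multiplicativity of each automorphism.
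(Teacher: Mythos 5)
Your proof is correct, and it is essentially the argument behind the cited result: the paper itself states this lemma without proof, quoting it from \cite{FS2}, where the same two ingredients appear — transitivity of projectivity along $\Gamma^H \subset \Gamma \subset U$, and the averaging (Reynolds) operator $e = \tfrac{1}{|H|}\sum_{h\in H} h$, which is right (left) $\Gamma^H$-linear precisely because elements of $\Gamma^H$ are $H$-fixed, exhibiting $U^H$ as a $\Gamma^H$-module direct summand of $U$. Your identification of the $\Gamma^H$-linearity of $e$ as the one non-formal step, valid here since $\mathrm{char}\,\k = 0$ makes $|H|$ invertible, is exactly right.
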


\subsection{Generalized Weyl algebras}

We will often use a realization of a given algebra as a \emph{generalized Weyl algebra} \cite{Bavula}. 
Let $D$ be a ring, $\sigma=(\sigma_1,\ldots, \sigma_n)$ an $n$-tuple of commuting automorphisms of $D$,  $a=(a_1,\ldots, a_n)$  nonzero elements of the center of $D$ and $\sigma_i(a_j)=a_j, j \neq i$. The generalized Weyl algebra $D(a, \sigma)$  is generated over $D$ by $X_i, Y_i$, $i=1,\ldots, n$ subject to the  relations: 

\[ X_i d = \sigma_i (d) X_i; \, Y_i d= \sigma_i^{-1}(d) Y_i, \, d \in D, i=1, \ldots , n , \]
\[ Y_i X_i = a_i; \, X_i Y_i = \sigma_i(a_i), \, i=1 ,\ldots , n \, ,\]
\[ [Y_i, X_j]=[Y_i, Y_j]=[X_i, X_j]=0 \, , i \neq j.\]

We will assume that $D$ is a Noetherian domain which is  finitely generated  $\k$-algebra. Fix a basis $e_1, \ldots, e_n$ of the free abelian group $\mathbb{Z}^n$. 
There is natural embedding of $D(a, \sigma)$ int the skew group ring $D*\mathbb{Z}^n$, where the  action on $D$ is defined as follows: $re_i$ acts as $\sigma_i^r$, for all $i$ and $r \in \mathbb{Z}$. Moreover, this embedding is an  isomorphism if each $a_i$ is a unit in $D$, $i=1, \ldots, n$ (cf. \cite{FS2}, Proposition 4).  Both algebras  algebras, $D(a, \sigma)$ and $D*\mathbb{Z}^n$, admit the skew fields of fractions.  Hence, following the discussion above we have

\begin{proposition}\label{prop-birational}
The algebras $D(a, \sigma)$ and $D*\mathbb{Z}^n$ have isomorphic skew fields of fractions.
\end{proposition}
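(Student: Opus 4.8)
The plan is to deduce Proposition~\ref{prop-birational} from the material already assembled, chiefly the embedding $D(a,\sigma)\hookrightarrow D*\mathbb{Z}^n$ described just above and the localization machinery of Remark~\ref{rem-Ore} and Proposition~\ref{prop-supp}(ii). The statement asserts that the two algebras have isomorphic skew fields of fractions, so the heart of the matter is to show that both admit a (left and right) Ore localization producing a division ring, and that these division rings coincide. I would organize the argument around exhibiting $D(a,\sigma)$ as a Galois ring over the commutative domain $D$ inside an appropriate crossed product, and then invoking Proposition~\ref{prop-supp}(ii) to identify the common localization.

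First I would set up the crossed-product framework. Taking $L=K$ to be the field of fractions of $D$, the monoid $\mathfrak{M}$ to be the free abelian monoid (or, after localizing at the $a_i$, the group $\mathbb{Z}^n$) acting on $K$ via the commuting automorphisms $\sigma_1,\ldots,\sigma_n$, and $G=\{e\}$, I would check that the hypotheses of the Galois-ring setup are met: $D$ is a Noetherian domain finitely generated as a $\k$-algebra by assumption, and the separation condition on $\mathfrak{M}$ holds because distinct monomials in commuting infinite-order automorphisms have distinct restrictions to $K$. This is the multivariable analogue of Remark~\ref{rem-Ore}, and I would present it as an iterated Ore extension $D[X_1;\sigma_1]\cdots$ or directly as the skew group setting, noting that the generators $X_i,Y_i$ of $D(a,\sigma)$ have support $\{\sigma_i\}$ and $\{\sigma_i^{-1}\}$ respectively, which together generate $\mathbb{Z}^n$ as a monoid.

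Next I would apply Proposition~\ref{prop-supp}(i) to conclude that $D(a,\sigma)$ is a Galois ring over $D$ in $(K*\mathbb{Z}^n)^{\{e\}}=K*\mathbb{Z}^n$: the generators $X_i,Y_i$ (together with $D$) have supports whose union generates the monoid. Then Proposition~\ref{prop-supp}(ii) gives that $S=D\setminus\{0\}$ is a left and right Ore set and that the two-sided localization of $D(a,\sigma)$ by $S$ is isomorphic to $K*\mathbb{Z}^n$. Running the same argument for $D*\mathbb{Z}^n$ — which is trivially a Galois ring over $D$ with the same generating supports, since it literally contains the $e_i$ — yields that its localization by $S$ is also $K*\mathbb{Z}^n$. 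Because $K*\mathbb{Z}^n$ is a crossed product of a field by a group, it is an Ore domain, so its own field of fractions is well defined; localizing further at the nonzero elements of the center (or passing to $\Frac(K*\mathbb{Z}^n)$) then produces a common skew field, proving that $D(a,\sigma)$ and $D*\mathbb{Z}^n$ are birationally equivalent.

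I expect the main obstacle to be the bookkeeping in the multivariable, non-unit case: when the $a_i$ are not units in $D$, the natural map $D(a,\sigma)\to D*\mathbb{Z}^n$ is an embedding but not an isomorphism, so the two algebras differ, and I must confirm that localizing at $S=D\setminus\{0\}$ nonetheless identifies them. The delicate point is that inverting the commutative part makes $Y_iX_i=a_i$ and $X_iY_i=\sigma_i(a_i)$ invertible in $K*\mathbb{Z}^n$, so each $X_i$ becomes a unit with inverse $a_i^{-1}Y_i$, which is exactly what collapses the embedding into an isomorphism after localization. Verifying the left and right Ore conditions simultaneously — so that the localized ring is genuinely a two-sided object and the isomorphism is canonical on both sides — is the technical step I would want to carry out carefully, but it follows from Proposition~\ref{prop-supp}(ii) once the Galois-ring hypotheses are in place, so no new machinery is needed beyond what the preliminaries supply.
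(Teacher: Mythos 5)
Your argument routes the whole proof through the Galois-ring machinery of Proposition~\ref{prop-supp}, and that is where it has a genuine gap: that machinery requires $\mathfrak{M}$ to be a submonoid of ${\rm Aut}_{\k}\,L$ satisfying the separation condition, and with $L=K$, $G=\{e\}$ this says precisely that the homomorphism $\mathbb{Z}^n\to {\rm Aut}_{\k}\,K$, $e_i\mapsto\sigma_i$, is injective, i.e.\ that $\sigma_1,\ldots,\sigma_n$ are linearly independent over $\mathbb{Z}$. Your justification --- that ``distinct monomials in commuting infinite-order automorphisms have distinct restrictions to $K$'' --- is false: take $\sigma_1=\sigma_2$ of infinite order, or $\sigma_2=\sigma_1^2$; these commute and each has infinite order, yet $\sigma_1\sigma_2^{-1}$ restricts to the identity on $K$. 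Worse, the paper's definition of a generalized Weyl algebra imposes no condition at all on the orders of the $\sigma_i$: for $n=1$, $D=\k[H]$, $a=H$, $\sigma=\mathrm{id}$ one has $D(a,\sigma)\cong\k[X,Y]$ and $D*\mathbb{Z}\cong\k[H][t^{\pm 1}]$, where $\mathbb{Z}$ acts with kernel all of $\mathbb{Z}$; here $\mathbb{Z}$ cannot be identified with any submonoid of ${\rm Aut}_{\k}\,K$, so Proposition~\ref{prop-supp} is simply inapplicable, yet the statement being proved is still true (both fraction fields are purely transcendental of degree $2$). In effect your proof establishes the proposition only under the extra hypothesis of $\mathbb{Z}$-linear independence of the $\sigma_i$ --- which is exactly the hypothesis the paper reserves, in the sentence immediately following the proposition, for the genuinely stronger assertion that $D(a,\sigma)$ is a Galois \emph{order} over $D$.

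The proposition itself needs none of this, and the paper's (terse) argument avoids it. One uses the explicit embedding $D(a,\sigma)\hookrightarrow D*\mathbb{Z}^n$ (identity on $D$, $X_i\mapsto e_i$, $Y_i\mapsto a_ie_i^{-1}$), cited from \cite{FS2}, together with the fact that both rings are Noetherian domains (the skew group ring because $\mathbb{Z}^n$ is polycyclic and orderable, the generalized Weyl algebra by the standard graded argument), hence Ore, hence both admit skew fields of fractions. Then, inside $\Frac(D*\mathbb{Z}^n)$, the division subring generated by the image of $D(a,\sigma)$ contains $K$ and $e_i^{-1}=a_i^{-1}(a_ie_i^{-1})$, hence contains all of $D*\mathbb{Z}^n$, and by the universal property of Ore localization this division subring is a copy of $\Frac(D(a,\sigma))$; therefore the two fraction fields coincide. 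This is exactly the point you correctly identified as the crux --- inverting the $a_i$ collapses the embedding into an isomorphism --- but it requires no separation condition and no appeal to Proposition~\ref{prop-supp}. To repair your write-up, replace the first two steps by this direct localization argument (or else add the linear-independence hypothesis and accept a weaker statement).
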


Note that, if  $\sigma_1,\ldots, \sigma_n$ are linearly independent over $\Z$, then $D(a, \sigma)$ is a Galois order over $D$ in the skew group ring $(\Frac \ D)*\mathbb{Z}^n$ (cf. \cite{FS2}, Theorem 5).

\section{Invariants of quantum affine spaces}
% and the quantized Weyl algebra}
In this section we consider the invariants of quantum affine space $O_q(\k^{2n})$. Fix any integer $m>1$ and let $G_m\subset \k$ be a  cyclic group of order $m$. Our first group 
 $G=G_m^{\otimes n}$ is the product of $n$ copies of $G_m$.
Consider the following natural action of
$G_m^{\otimes n}$  on $O_q(\k^{2n})$: if  $g=(g_1, \ldots, g_n)\in G$ then $g(x_i)=g_ix_i$, $g(y_i)=y_i$, $i=1, \ldots, n$. This action was defined in \cite{Hartwig}, however we are using the defining relations as in \cite{Dumas}.

We have

\begin{proposition}\label{prop-affine-cyclic}
The invariant subspace $O_q(\k^{2n})^{G_m^{\otimes n}}$ is isomorphic to $O_{q^m}(\k^{2n})$.
\begin{proof}
The isomorphism just sends $x_i$ to $x_i^m$ and $y_i $ to $y_i$, $i=1, \ldots, n$.
\end{proof}
\end{proposition}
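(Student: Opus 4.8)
The plan is to pass to the PBW monomial basis of $O_q(\k^{2n})$ and exploit that the $G$-action is diagonal in this basis, so that the invariants are spanned by a distinguished family of monomials which is then matched against the monomial basis of $O_{q^m}(\k^{2n})$. Being an iterated Ore extension, $O_q(\k^{2n})$ has the basis of ordered monomials $x_1^{a_1}y_1^{b_1}\cdots x_n^{a_n}y_n^{b_n}$ with $a_i,b_i\ge 0$. Fix a generator $\zeta$ of $G_m$. Since $g(x_i)=g_ix_i$ and $g(y_i)=y_i$, each such monomial is a simultaneous eigenvector for $G=G_m^{\otimes n}$: the generator acting in the $i$-th slot scales it by $\zeta^{a_i}$. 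Hence a monomial is $G$-invariant precisely when $m\mid a_i$ for every $i$, and because $G$ acts by algebra automorphisms fixing a basis of eigenvectors, $O_q(\k^{2n})^{G}$ is exactly the $\k$-linear span of these monomials. In particular the invariant subalgebra is generated as a $\k$-algebra by $x_1^m,y_1,\ldots,x_n^m,y_n$.

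Next I would check that these generators obey the defining relations of $O_{q^m}(\k^{2n})$. The only nontrivial point is the intra-slot relation: iterating $y_ix_i=qx_iy_i$ gives $y_ix_i^m=q^m x_i^m y_i$, so $x_i^m$ and $y_i$ satisfy the quantum-plane relation with parameter $q^m$, while variables from distinct slots commute as they lie in different tensor factors. This yields a well-defined algebra homomorphism $\phi\colon O_{q^m}(\k^{2n})\to O_q(\k^{2n})^{G}$ with $\phi(x_i)=x_i^m$ and $\phi(y_i)=y_i$, surjective by the generation statement above.

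It remains to prove $\phi$ injective, which I regard as the main point. Here I would compare bases: $\phi$ sends the PBW monomial $x_1^{c_1}y_1^{d_1}\cdots x_n^{c_n}y_n^{d_n}$ of $O_{q^m}(\k^{2n})$ to $x_1^{mc_1}y_1^{d_1}\cdots x_n^{mc_n}y_n^{d_n}$, and distinct exponent tuples $(c_i,d_i)$ produce distinct invariant monomials in the PBW basis of $O_q(\k^{2n})$. Thus $\phi$ carries a basis bijectively onto the spanning set of invariant monomials identified in the first step, which simultaneously gives injectivity and reconfirms surjectivity, so that $\phi$ is an isomorphism. The computational steps are routine; the only place requiring care is the clean identification of the invariant space with the monomial span, for which the eigenbasis observation is essential.
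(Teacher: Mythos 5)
Your proposal is correct and follows exactly the paper's approach: the paper's proof consists precisely of the map $x_i \mapsto x_i^m$, $y_i \mapsto y_i$, stated without verification. Your PBW-basis argument (diagonal action, invariant monomials, relation check, basis comparison) simply supplies the routine details the paper leaves implicit.
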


Consider the free monoid $\mathbb{N}^n$ with generators $\epsilon_1, \ldots, \epsilon_n$ and the skew monoid ring $\k[x_1,\ldots,x_n]*\mathbb{N}^n$, where
 $\mathbb{N}^n$ acts as follows: $\epsilon_i(x_i) = qx_i$,  $\epsilon_i(x_j) = x_j$,  $j \neq i$,  $i, j=1, \ldots, n$.

\begin{proposition}\label{prop-affine-cyclic-order}
Quantum affine space $O_q(\k^{2n})$ is isomorphic to $\k[x_1,\ldots,x_n]*\mathbb{N}^n$. In particular, $O_q(\k^{2n})$ is a Galois ring over $\Gamma=\k[x_1,\ldots,x_n]$ in $\k(x_1,\ldots,x_n)*\mathbb{N}^n$.
\end{proposition}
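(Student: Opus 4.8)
The plan is to exhibit an explicit isomorphism between $O_q(\k^{2n})$ and the skew monoid ring $\k[x_1,\ldots,x_n]*\mathbb{N}^n$, and then to apply Proposition~\ref{prop-supp}(i). First I would recall the defining relations of $O_q(\k^{2n})$: it is generated by $x_1,\ldots,x_n,y_1,\ldots,y_n$ with $y_ix_i=qx_iy_i$, the $x_i$ commuting among themselves, the $y_i$ commuting among themselves, and $x_i,y_j$ commuting for $i\neq j$. On the other side, $\k[x_1,\ldots,x_n]*\mathbb{N}^n$ has the commutative polynomial ring as its degree-zero part, together with the monoid generators $\epsilon_1,\ldots,\epsilon_n$ acting by $\epsilon_i(x_i)=qx_i$ and $\epsilon_i(x_j)=x_j$ for $j\neq i$. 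The natural candidate map sends $x_i\mapsto x_i$ and $y_i\mapsto \epsilon_i$.

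The main verification is that this assignment respects the relations. In the skew monoid ring the commutation rule reads $\epsilon_i\cdot x_i = \epsilon_i(x_i)\cdot\epsilon_i = qx_i\epsilon_i$, which matches $y_ix_i=qx_iy_i$; similarly $\epsilon_i\cdot x_j = x_j\epsilon_i$ for $j\neq i$ matches $y_ix_j=x_jy_i$. Since $\mathbb{N}^n$ is a commutative (free abelian) monoid, the $\epsilon_i$ commute with one another, which gives $y_iy_j=y_jy_i$; and the $x_i$ commute in the polynomial ring, giving $x_ix_j=x_jx_i$. Thus the map is a well-defined $\k$-algebra homomorphism. To see it is an isomorphism I would compare PBW-type bases: monomials $x^{\alpha}y^{\beta}$ with $\alpha,\beta\in\mathbb{N}^n$ form a $\k$-basis of $O_q(\k^{2n})$, and their images $x^{\alpha}\epsilon^{\beta}$ form precisely a $\k$-basis of $\k[x_1,\ldots,x_n]*\mathbb{N}^n$, so the map is bijective on bases and hence an isomorphism.

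For the ``In particular'' clause, having identified $O_q(\k^{2n})$ with $\k[x_1,\ldots,x_n]*\mathbb{N}^n$, I would set $\Gamma=\k[x_1,\ldots,x_n]$, take $K=\k(x_1,\ldots,x_n)$ as its field of fractions, and embed everything into $L*\mathfrak{M}$ with $L=K$, $G=\{e\}$, and $\mathfrak{M}=\mathbb{N}^n$ acting through the $\epsilon_i$. The algebra is generated over $\Gamma$ by $y_1,\ldots,y_n$, whose supports are exactly the single elements $\epsilon_1,\ldots,\epsilon_n$, and these generate $\mathbb{N}^n$ as a monoid. Proposition~\ref{prop-supp}(i) then immediately yields that $O_q(\k^{2n})$ is a Galois ring over $\Gamma$ in $\k(x_1,\ldots,x_n)*\mathbb{N}^n$.

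I expect essentially no serious obstacle here; the only point requiring mild care is checking that the $\epsilon_i$ genuinely act by commuting automorphisms on $\k[x_1,\ldots,x_n]$ (so that $L*\mathfrak{M}$ is well-defined and the monoid embedding of Proposition~\ref{prop-supp} applies), and that the assumption that $q$ is not a root of unity does not enter the isomorphism itself but only guarantees the action is by infinite-order automorphisms, consistent with the setup of Remark~\ref{rem-Ore}.
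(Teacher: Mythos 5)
Your proposal is correct and follows exactly the paper's route: the paper's proof consists of the same assignment $x_i\mapsto x_i$, $y_i\mapsto\epsilon_i$ (with ``the rest is clear''), and your verification of the relations, the basis comparison, and the application of Proposition~\ref{prop-supp}(i) with supports $\{\epsilon_i\}$ generating $\mathbb{N}^n$ are precisely the details the paper leaves implicit.
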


\begin{proof}
The isomorphism is given by: $x_i \mapsto x_i$, $y_i \mapsto \epsilon_i$, $i=1,\ldots, n$. The rest is clear.
\end{proof}

For $m \geq 1$, $n \geq 1$, $p|m, p>0$ denote by $A(m,p,n)$  the subgroup of $G_m^{\otimes n}$ consisting of elements $(h_1,\ldots, h_n)$ such that $(h_1h_2 \ldots h_n)^{m/p} = id$. 
 The groups $G(m,p,n) = A(m,p,n) \rtimes S_n$ were introduced by Shephard and Todd and describe all irreducible non-exceptional  complex reflection groups. Here 
  $S_n$  acts on  $A(m, p, n)$ by permutations. 
 
Let $G=G(m,p,n)$, and   
 consider the following action of $G$ on $O_q(\k^{2n})$: $h=(g,\pi) \in G$,  $g=(g_1, \ldots, g_n)\in G_m^{\otimes n}, \, \pi \in S_n$, with $h(x_i)= g_i x_{\pi(i)}$, $h(y_i)=y_{\pi(i)}$, $i=1,\ldots, n$.  The  group $G$ also acts on $\k[x_1,\ldots,x_n]* \mathbb{N}^n$: the action on $x_i$ is the same as above, and $h(\epsilon_i)= \epsilon_{\pi(i)}$.
  Clearly,  $G$ acts on $\mathbb{N}^n$  by conjugations, and the isomorphism in  Proposition \ref{prop-affine-cyclic-order} is $G$-equivariant.  Hence, 
  $O_q(\k^{2n})^{G}$ and $(\k[x_1,\ldots,x_n]*\mathbb{N}^n)^{G}$ are canonically isomorphic.  Hence, $O_q(\k^{2n})^G$   
 is a Galois order over $\Gamma=\k[x_1, \ldots, x_n]^{G}$.
  Taking into account that $\Gamma$ is a polynomial algebra and applying 
  Proposition \ref{prop-proj}, Lemma \ref{lemma2} and  \cite{Bass}, Corollary 4.5,
   we have

\begin{theorem}\label{thm-affine-refl-order}
For every $G=G(m,p,n)$ the invariant subring $O_q(\k^{2n})^G$ of the quantum affine space is 
 is a Galois order over $\Gamma=\k[x_1, \ldots, x_n]^{G}$.   Moreover,   $O_q(\k^{2n})^G$ is free as  left (right) $\Gamma$-modules. \end{theorem}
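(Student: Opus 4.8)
The plan is to transport the entire problem, via the $G$-equivariant isomorphism $O_q(\k^{2n})\cong \k[x_1,\ldots,x_n]*\mathbb{N}^n$ of Proposition \ref{prop-affine-cyclic-order}, into the skew monoid ring, where both the Galois structure and the module structure become transparent. Write $\Gamma_0=\k[x_1,\ldots,x_n]$ and $\Gamma=\Gamma_0^G$. Since the isomorphism is $G$-equivariant and $G$ acts on $\mathbb{N}^n$ by conjugation (permuting the generators $\epsilon_i$), taking invariants gives $O_q(\k^{2n})^G\cong(\k[x_1,\ldots,x_n]*\mathbb{N}^n)^G$, which embeds in $(L*\mathbb{N}^n)^G$ with $L=\k(x_1,\ldots,x_n)$. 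As $G=G(m,p,n)$ acts on $L$ through its (faithful) reflection representation and $K:=L^G=\Frac(\Gamma)$, Artin's theorem makes $L/K$ Galois with group $G$; the generators $\epsilon_i$ act on $L$ by distinct automorphisms because $q$ is not a root of unity, so $\mathbb{N}^n\hookrightarrow\Aut_\k L$ and we are exactly in the setting of Galois rings with monoid $\mathfrak{M}=\mathbb{N}^n$. This already shows $O_q(\k^{2n})^G$ is a Galois ring over $\Gamma$.

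To upgrade ``ring'' to ``order'' and to obtain freeness I would first establish projectivity over $\Gamma$ and then improve it to freeness. For projectivity I would apply Lemma \ref{lemma2} with $U=O_q(\k^{2n})$, commutative subalgebra $\Gamma_0$, and $H=G$. Both hypotheses hold: first, $U=\bigoplus_{\bs{a}\in\mathbb{N}^n}\Gamma_0\,\epsilon^{\bs{a}}$ is free, hence projective, as a left (right) $\Gamma_0$-module; second, because $G=G(m,p,n)$ acts on $\k[x_1,\ldots,x_n]$ through its reflection representation, the Shephard--Todd--Chevalley theorem guarantees that $\Gamma=\Gamma_0^G$ is again a polynomial algebra and that $\Gamma_0$ is free, in particular projective, over $\Gamma$ of rank $|G|$. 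Lemma \ref{lemma2} then yields that $O_q(\k^{2n})^G$ is projective over $\Gamma$, and since $\Gamma$ is Noetherian, Proposition \ref{prop-proj} promotes the Galois ring to a left (right) Galois order over $\Gamma$.

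It remains to pass from projective to free. The key observation is that $O_q(\k^{2n})^G$ is \emph{not} finitely generated as a $\Gamma$-module: it is a direct summand, via the Reynolds operator $|G|^{-1}\sum_{g\in G}g$, of the $\Gamma$-module $O_q(\k^{2n})=\bigoplus_{\bs{a}}\Gamma_0\,\epsilon^{\bs{a}}$, which is free of countably infinite rank. Because $\Gamma$ is a polynomial ring, it is a Noetherian domain with connected spectrum, so Bass's theorem on big projective modules (\cite{Bass}, Corollary 4.5) applies: an infinitely generated projective module over such a ring is free. This gives freeness of $O_q(\k^{2n})^G$ over $\Gamma$ on both the left and the right.

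The step I expect to be the main obstacle is precisely this last transition from projective to free. Projectivity over the invariant ring is essentially formal once Lemma \ref{lemma2} is available, but freeness requires two inputs to be lined up correctly: that $\Gamma$ is a polynomial (hence regular, connected) ring, which is where the reflection-group hypothesis $G=G(m,p,n)$ is indispensable through Shephard--Todd--Chevalley, and that the module is infinitely generated, so that Bass's theorem rather than Quillen--Suslin is the relevant tool. I would also take care to verify the equivariance of the isomorphism and the faithfulness of the $G$-action on $L$, since these underlie the identification $O_q(\k^{2n})^G\cong(L*\mathbb{N}^n)^G$ on which the whole argument rests.
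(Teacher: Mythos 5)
Your proposal is correct and takes essentially the same route as the paper, which likewise passes through the $G$-equivariant identification $O_q(\k^{2n})^G\cong(\k[x_1,\ldots,x_n]*\mathbb{N}^n)^G$ and then invokes, exactly as you do, the fact that $\Gamma=\k[x_1,\ldots,x_n]^G$ is a polynomial algebra (Shephard--Todd--Chevalley), Lemma \ref{lemma2} for projectivity, Proposition \ref{prop-proj} to upgrade the Galois ring to a Galois order, and \cite{Bass}, Corollary 4.5 for freeness; you merely spell out the details the paper leaves implicit. One minor slip: being a direct summand of an infinite-rank free module does not by itself show that $O_q(\k^{2n})^G$ is infinitely generated over $\Gamma$ (finitely generated modules can be such summands); instead observe that the invariant ring has nonzero components in infinitely many $\mathbb{N}^n$-degrees while $\Gamma$ sits in degree zero, so it cannot be finitely generated, after which Bass's theorem applies as you say.
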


\subsection{Invariants of quantum torus}

One can extend  Theorem \ref{thm-affine-refl-order} to \emph{quantum torus} $O_q(\k*^{2n})^G$, which is the localization of $O_q(\k^{2n})^G\simeq \k[x_1,\ldots, x_n]*\mathbb{N}^n$ by 
$x_1,\ldots, x_n$, $y_1,\ldots, y_n$. 
Hence, 
$$O_q(\k*^{2n})^G\simeq \k[x^{\pm 1}_1,\ldots, x^{\pm 1}_n]*\mathbb{Z}^n.$$  

We also have by Proposition \ref{prop-affine-cyclic}:
 $$O_q(\k*^{2n})^{G_m^{\otimes n}}\simeq O_{q^m}(\k*^{2n}).$$  Using the arguments  before Theorem \ref{thm-affine-refl-order} we immediately obtain

\begin{theorem}\label{thm-torus-refl-order}
For every $G=G(m,p,n)$ the invariant subring $O_q(\k*^{2n})^G$ of the quantum torus is 
 is a Galois order over $\Gamma=\k[x^{\pm 1}_1, \ldots, x^{\pm 1}_n]^{G}$ in  $\k(x_1,\ldots, x_n)*\mathbb{Z}^n)^{G}$. Moreover,   $O_q(\k^{2n})^G$ is free as a left (right) $\Gamma$-module.
\end{theorem}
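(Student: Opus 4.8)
The plan is to reduce Theorem~\ref{thm-torus-refl-order} to the affine-space case already established in Theorem~\ref{thm-affine-refl-order} by passing to a localization. First I would observe that the quantum torus $O_q(\k*^{2n})$ is obtained from the quantum affine space $O_q(\k^{2n})$ by inverting the elements $x_1,\ldots,x_n,y_1,\ldots,y_n$, and that under the isomorphism of Proposition~\ref{prop-affine-cyclic-order} these are exactly the elements needed to turn the free monoid $\mathbb{N}^n$ into the group $\mathbb{Z}^n$ and to invert the coordinate functions. Thus $O_q(\k*^{2n})\simeq \k[x_1^{\pm1},\ldots,x_n^{\pm1}]*\mathbb{Z}^n$, matching Remark~\ref{rem-Ore} where localizing a skew polynomial ring at $x$ produces $D*\mathbb{Z}$.

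The key step is to check that the $G$-action is compatible with this localization so that invariants commute with it. Since $G=G(m,p,n)$ acts by $h(x_i)=g_i x_{\pi(i)}$ and $h(y_i)=y_{\pi(i)}$, the multiplicative set generated by $x_1,\ldots,x_n,y_1,\ldots,y_n$ is $G$-stable, and hence the Ore localization is a $G$-equivariant construction. Taking invariants then gives $O_q(\k*^{2n})^G$ as the corresponding localization of $O_q(\k^{2n})^G$, which by the equivariant isomorphism equals $(\k[x_1^{\pm1},\ldots,x_n^{\pm1}]*\mathbb{Z}^n)^G$. This identifies the ambient skew group ring as $(\k(x_1,\ldots,x_n)*\mathbb{Z}^n)^G$ and singles out $\Gamma=\k[x_1^{\pm1},\ldots,x_n^{\pm1}]^G$ as the candidate commutative base, exactly as asserted in the statement.

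Once the identification is in place, I would run the same argument used just before Theorem~\ref{thm-affine-refl-order}: the automorphisms $\sigma_i$ generating $\mathbb{Z}^n$ are linearly independent over $\Z$ (they act as distinct scalings by $q$, which is not a root of unity), so $\k[x_1^{\pm1},\ldots,x_n^{\pm1}]*\mathbb{Z}^n$ is a Galois order over $\k[x_1^{\pm1},\ldots,x_n^{\pm1}]$, and passing to $G$-invariants with $\Gamma=\k[x_1^{\pm1},\ldots,x_n^{\pm1}]^G$ preserves the Galois-order property. For freeness I would again invoke Lemma~\ref{lemma2} together with Proposition~\ref{prop-proj}: one needs that $\k[x_1^{\pm1},\ldots,x_n^{\pm1}]*\mathbb{Z}^n$ is projective over $\Gamma=\k[x_1^{\pm1},\ldots,x_n^{\pm1}]$ and that $\k[x_1^{\pm1},\ldots,x_n^{\pm1}]$ is projective over its $G$-invariants, after which the module structure descends.

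The main obstacle I expect is the freeness claim rather than the Galois-order claim. Passing from the polynomial base to the Laurent-polynomial base means one can no longer quote \cite{Bass}, Corollary~4.5 verbatim, since that statement concerns polynomial rings and graded structures; the ring $\k[x_1^{\pm1},\ldots,x_n^{\pm1}]^G$ of invariant Laurent polynomials is no longer a polynomial algebra, so establishing that $\k[x_1^{\pm1},\ldots,x_n^{\pm1}]$ is a \emph{projective} (indeed free) module over it requires a separate argument. The natural route is to note that the localization of a free (graded) module remains free, so that freeness over $\Gamma$ is inherited from the freeness already proved in the affine case by localizing at the $G$-invariant multiplicative set; I would verify this descent carefully, as it is the one place where the reduction to Theorem~\ref{thm-affine-refl-order} is not purely formal.
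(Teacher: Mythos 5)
Your proposal is correct and follows essentially the same route as the paper: the paper likewise deduces the theorem by viewing $O_q(\k*^{2n})^G$ as the localization of $O_q(\k^{2n})^G\simeq(\k[x_1,\ldots,x_n]*\mathbb{N}^n)^G$ at the $x_i,y_i$, identifying it $G$-equivariantly with $(\k[x^{\pm 1}_1,\ldots,x^{\pm 1}_n]*\mathbb{Z}^n)^G$, and rerunning the arguments preceding Theorem \ref{thm-affine-refl-order} (Proposition \ref{prop-proj}, Lemma \ref{lemma2}, and Bass's freeness result). The only difference is one of care: the paper claims the conclusion ``immediately,'' whereas you correctly isolate and repair the one non-formal step, namely that $\Gamma=\k[x^{\pm 1}_1,\ldots,x^{\pm 1}_n]^G$ is no longer a polynomial algebra, so the freeness of the Laurent ring over its invariants cannot be quoted from the Chevalley-type statement used in the affine case but must instead be obtained by localizing that freeness along the $G$-stable multiplicative set.
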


\subsection{Quantum complex plane}
In this section we assume that 
$\k = \mathbb{C}$.

\begin{proposition}\label{prop-Dumas}
Consider any finite group $G$ of automorphisms of the quantum plane $\mathbb{C}_q[x,y]$.  Then the ring of invariants  $\mathbb{C}_q[x,y]^G$ is embedded into the Ore extension 
$\mathbb{C}_q[x,y]_x^G \cong \mathbb{C}(x^m)[v; \sigma]$, where $\sigma(x^m)=q^n x^m$ for some $n, m >0$ and $v = x^k y^l$, $l, k>0$.
\end{proposition}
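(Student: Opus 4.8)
The plan is to pin down the shape of $G$ and then compute the invariants inside a convenient Ore localization, one graded piece at a time. First I would invoke the structure of the automorphism group: since $q$ is not a root of unity, every $\mathbb{C}$-algebra automorphism of $\mathbb{C}_q[x,y]$ is \emph{diagonal}, $x\mapsto\alpha x$, $y\mapsto\beta y$ with $\alpha,\beta\in\mathbb{C}^{*}$ (Alev--Chamarie). Hence $G$ is a finite abelian subgroup of the torus $\mathbb{C}^{*}\times\mathbb{C}^{*}$, all $\alpha,\beta$ are roots of unity, and the image $\mu_m$ of the first projection $G\to\mathbb{C}^{*}$ is cyclic of some order $m\geq 1$; in particular $x^{m}$ is $G$-invariant. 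The action is monomial, $g\cdot(x^{i}y^{j})=\alpha^{i}\beta^{j}x^{i}y^{j}$, so invariance of monomials is governed by the lattice $\Lambda=\{(i,j)\in\mathbb{Z}^{2}:\alpha^{i}\beta^{j}=1\ \text{for all}\ (\alpha,\beta)\in G\}$, which has finite index in $\mathbb{Z}^{2}$ because $\mathbb{Z}^{2}/\Lambda$ embeds into the character group of $G$; in particular $\Lambda$ has rank $2$.

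Next I would localize at the Ore set $\mathbb{C}[x]\setminus\{0\}$, which is an Ore set because $\sigma\colon x\mapsto qx$ is an automorphism preserving it. This gives $\mathbb{C}_q[x,y]_x\cong\mathbb{C}(x)[y;\sigma]$ with $y\,f(x)=f(qx)\,y$, on which $G$ still acts diagonally. The algebra is $\mathbb{N}$-graded by the $y$-degree, and the action is homogeneous, $g\cdot(f(x)y^{j})=\beta^{j}f(\alpha x)\,y^{j}$, so the degree-$j$ part of the invariants consists of the $\beta^{-j}$-semi-invariant rational functions $f$, those with $f(\alpha x)=\beta^{-j}f(x)$ for all $g\in G$.

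The heart of the argument, and the step I expect to be the main obstacle, is the identification of these graded pieces. Using that $\mu_m$ acts on $\mathbb{C}(x)$ by $x\mapsto\alpha x$, Artin's theorem gives the fixed field $\mathbb{C}(x)^{\mu_m}=\mathbb{C}(x^{m})$. Let $l\geq 1$ be least with $(k,l)\in\Lambda$ for some $k$ (such $l$ exists since $\Lambda$ has rank $2$), normalize $k>0$ by adding a multiple of $m$ (permissible since $(m,0)\in\Lambda$), and check that $\Lambda$ is generated by $(m,0)$ and $(k,l)$. A short computation then shows the degree-$j$ invariants vanish unless $j=lt$, and for $j=lt$ they are exactly $x^{kt}\,\mathbb{C}(x^{m})$; setting $v=x^{k}y^{l}$ this realizes the invariant ring as $\bigoplus_{t\geq 0}\mathbb{C}(x^{m})\,v^{t}$. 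Pinning down the semi-invariant \emph{rational} functions (not just the polynomial ones) and proving this single-coset description is precisely what forces the clean Ore form.

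Finally I would recognize the answer as an Ore extension. From $y^{l}x^{m}=q^{lm}x^{m}y^{l}$ one gets $v\,x^{m}=q^{lm}x^{m}\,v$, so conjugation by $v$ restricts to the automorphism $\sigma$ of $\mathbb{C}(x^{m})$ with $\sigma(x^{m})=q^{n}x^{m}$, $n=lm>0$, which has infinite order since $q$ is not a root of unity. As the powers $v^{t}$ form a free left $\mathbb{C}(x^{m})$-basis by the grading, this yields $\mathbb{C}_q[x,y]_x^{G}\cong\mathbb{C}(x^{m})[v;\sigma]$. The inclusion $\mathbb{C}_q[x,y]^{G}\hookrightarrow\mathbb{C}_q[x,y]_x^{G}$ is injective because $\mathbb{C}_q[x,y]$ is a domain and we only invert nonzero elements, which gives the asserted embedding and completes the proof.
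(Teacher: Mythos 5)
Your proof is correct, and at the top level it follows the same route as the paper: reduce to diagonal actions via Alev--Chamarie, pass to the localization with coefficients in $\mathbb{C}(x)$, identify the invariants as $\mathbb{C}(x^m)[v;\sigma]$, and normalize $v$ to the form $x^k y^l$ with $k,l>0$ by multiplying by powers of the invariant element $x^m$ (the paper does this same last adjustment). The genuine difference is the key middle step: the paper quotes Dumas, 3.3.3, as a black box for the isomorphism $\mathbb{C}_q[x,y]_x^G \cong \mathbb{C}(x^m)[v;\sigma]$, whereas you prove it from scratch by grading by $y$-degree, identifying the degree-$j$ piece of the invariants with the $\beta^{-j}$-semi-invariant rational functions, and controlling these through the finite-index lattice $\Lambda$ of invariant monomial exponents together with Artin's theorem $\mathbb{C}(x)^{\mu_m}=\mathbb{C}(x^m)$. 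This buys self-containedness, and in fact a bit more rigor: your lattice argument treats an arbitrary finite subgroup of the torus $(\mathbb{C}^*)^2$ uniformly, whereas the paper's intermediate assertion that such a group ``has the form $G_m\times G_{m'}$'' with the two cyclic factors acting separately on $x$ and on $y$ is imprecise --- a diagonal cyclic group generated by $(\xi,\xi)$ is not of that coordinate-wise form --- and the paper's conclusion stands only because the cited result of Dumas applies to any finite diagonal group. What the paper's version buys in exchange is brevity. The one compression in your write-up is the ``short computation'' that nonzero degree-$j$ invariants force $j=lt$ and equal $x^{kt}\mathbb{C}(x^m)$; to close it, write $f=x^d u$ with $u(0)\neq 0,\infty$, and compare leading Laurent coefficients at $0$ in $f(\alpha x)=\beta^{-j}f(x)$ to get $(d,j)\in\Lambda$ and $u\in\mathbb{C}(x)^{\mu_m}$ --- a two-line check, so this is a presentational gap only, not a mathematical one.
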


\begin{proof} The action of $G$ on the quantum plane $\mathbb{C}_q[x,y]$  extends naturally to its action on the localization of $\mathbb{C}_q[x,y]$ by $x$.
It was shown in \cite{Alev4} that every  finite group $G$ of automorphisms of  the quantum plane is a subgroup of the torus $\mathbb{C}*^2$, and thus has the form $G_m \times G_{m'}$ for cyclic groups of orders $m$ and $n$ respectively. 
Let $g'$ be a generator of $G_m$ and $g''$ a generator of $G_{m'}$. Then $(g'^k,g''^l)(x)=\alpha^k x, \, (g'^k,g''^l)(y)=\beta^ly$, where $\alpha$ is a primitive $m$-th root of unity, and $\beta$ is a primitive $m''$-th root of unity.
 The subring of $G$-invariants of the localized ring $\mathbb{C}_q[x,y]_x$ is the Ore extension $\mathbb{C}(x^m)[v; \sigma]$,  where $\sigma(x^m)=q^n x^m$ for some $n$ and $m$ by \cite{Dumas}, 3.3.3. Multiplying $v$ by $x^m$ sufficiently many times, we can assume it to be  in the claimed form.
\end{proof}

We  have the following general result about the invariants of the quantum plane.

\begin{theorem}\label{thm-plane-order}
For every finite group $G$ of automorphisms of  the quantum plane $\mathbb{C}_q[x,y]$ the subring of invariants  $\mathbb{C}_q[x,y]^G$ is a Galois order over 
a certain polynomial subalgebra  $\Gamma$. Moreover,  $\mathbb{C}_q[x,y]^G$  is free as a left (right) $\Gamma$-module.
\end{theorem}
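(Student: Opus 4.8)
The plan is to deduce everything from the embedding of Proposition \ref{prop-Dumas} together with the Galois-order machinery. By that proposition we may realize $\mathbb{C}_q[x,y]^G$ inside the Ore extension $E = \mathbb{C}(x^m)[v;\sigma]$, where $v = x^k y^l$ with $k,l>0$ and $\sigma(x^m) = q^n x^m$. I would take as base ring the polynomial algebra $\Gamma = \mathbb{C}[x^m]$: since $m$ is the order of the character by which $G$ acts on $x$, the element $x^m$ is invariant, so $\Gamma \subseteq \mathbb{C}_q[x,y]^G$, and $\Gamma$ is a Noetherian polynomial algebra in one variable with field of fractions $K = \mathbb{C}(x^m)$. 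By Remark \ref{rem-Ore} the Ore extension is identified with the skew monoid ring $K * \mathfrak{M}$, where $\mathfrak{M} \cong \mathbb{N}$ is generated by $\sigma$ and $v \mapsto \bar 1$. As the Galois group is trivial here (one has $L = K$), this exhibits $\mathbb{C}_q[x,y]^G$ as a $\Gamma$-subring of $(L * \mathfrak{M})^G = K * \mathfrak{M}$.

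First I would establish the Galois-ring property. By the Montgomery--Small theorem (Theorem \ref{Montgomery}, applied with central subring $\mathbb{C}$) the invariant ring $\mathbb{C}_q[x,y]^G$ is a finitely generated $\mathbb{C}$-algebra, hence a finitely generated $\Gamma$-ring; adjoining the invariant monomial $v = x^k y^l$ to a finite generating set, one has $\supp v = \{\bar 1\}$, which generates $\mathfrak{M} = \mathbb{N}$ as a monoid. Proposition \ref{prop-supp}(i) then gives that $\mathbb{C}_q[x,y]^G$ is a Galois ring over $\Gamma$.

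The crucial point is freeness over $\Gamma$. Here I would exploit the $\mathbb{N}$-grading of $\mathbb{C}_q[x,y]$ by $y$-degree, which $G$ preserves because it acts on $y$ by a scalar; thus $\mathbb{C}_q[x,y]^G = \bigoplus_b M_b$, where $M_b$ is spanned by the invariant monomials $x^a y^b$ of $y$-degree $b$. Writing the action as $x \mapsto \chi_1(g)x$, $y \mapsto \chi_2(g)y$ and noting that $\chi_1$ has order $m$, invariance of $x^a y^b$ forces $\chi_1(g)^a = \chi_2(g)^{-b}$ for all $g$; hence for each $b$ the admissible exponents $a$ form either the empty set or a single residue class $a \equiv a_0(b) \pmod m$ with $a_0(b) \in \{0,\dots,m-1\}$. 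Consequently each nonzero $M_b$ is a free $\Gamma$-module of rank one with basis $\{x^{a_0(b)} y^b\}$, since left (respectively right) multiplication by $x^m$ carries this generator to unit scalar multiples of $x^{a_0(b)+mi} y^b$. Summing over $b$ shows $\mathbb{C}_q[x,y]^G$ is free, on both sides, over $\Gamma$.

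Finally, since $\Gamma$ is Noetherian and $\mathbb{C}_q[x,y]^G$ is a Galois ring over $\Gamma$ that is projective (indeed free) as a left and as a right $\Gamma$-module, Proposition \ref{prop-proj} yields that it is a left and right Galois order over $\Gamma$, completing the argument. I expect the main obstacle to be the freeness step: one must verify that the $y$-grading is genuinely compatible with the $\Gamma$-action and that the coefficient field of the skew monoid ring is exactly $\Frac(\Gamma) = \mathbb{C}(x^m)$, so that the rank-one graded pieces assemble into a free module over the chosen base. Once this is in place the Galois-ring and Galois-order conclusions are essentially formal consequences of Propositions \ref{prop-supp} and \ref{prop-proj}.
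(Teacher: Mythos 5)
Your proof is correct, and its skeleton --- the embedding from Proposition \ref{prop-Dumas}, the choice $\Gamma=\mathbb{C}[x^m]$, and Proposition \ref{prop-proj} to pass from projectivity to the Galois-order property --- is the same as the paper's; but you handle the two substantive steps differently. For the Galois-ring property the paper only gestures at Remark \ref{rem-Ore}, whereas you make the argument explicit: Theorem \ref{Montgomery} (Montgomery--Small) gives finite generation, and adjoining the invariant element $v$, whose support $\{\overline{1}\}$ generates $\mathbb{N}$, lets Proposition \ref{prop-supp}(i) apply; this is a genuine tightening of a step the paper leaves implicit. More significantly, for freeness the paper argues abstractly: Lemma \ref{lemma2} (applied with $\mathbb{C}_q[x,y]$ free over $\mathbb{C}[x]$ and $\mathbb{C}[x]$ free over $\mathbb{C}[x]^G=\mathbb{C}[x^m]$) yields projectivity of $\mathbb{C}_q[x,y]^G$ over $\Gamma$, and then Bass's theorem on big projective modules upgrades projectivity to freeness. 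You instead produce an explicit two-sided $\Gamma$-basis: since every finite automorphism group of $\mathbb{C}_q[x,y]$ is diagonal (Alev--Chamarie, the same input underlying Proposition \ref{prop-Dumas}), the invariant ring has a monomial basis, and for each $y$-degree $b$ the admissible $x$-exponents form a single residue class modulo $m$ (or are empty), so the monomials $x^{a_0(b)}y^b$ form a basis over $\mathbb{C}[x^m]$ on either side; your verification that right multiplication only introduces unit scalars $q^{mb i}$ is exactly the point that makes the two-sided statement immediate. Your route is constructive and self-contained (no Bass, no Lemma \ref{lemma2}, and it exhibits the basis), at the cost of being special to the rank-one diagonal situation; the paper's route is shorter and non-constructive, and what it buys is uniformity --- the same projectivity-transfer argument is reused verbatim in the higher-rank cases (Theorems \ref{thm-affine-refl-order} and \ref{thm-quantum-order}), where no explicit monomial basis of the invariants is available.
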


\begin{proof}
The subring of invariants  $\mathbb{C}_q[x,y]^G$ $\mathbb{C}_q[x,y]^G$ is embedded into $\mathbb{C}(x^m)[v;\sigma] \cong \mathbb{C}(x^m)*\mathbb{N}$ by Proposition \ref{prop-Dumas}, where the generator $\overline{1}$ of $\mathbb{N}$ acts as follows: $\overline{1}(x^m)=q^n x^m$. Also, $v = x^k y^l$ is $G$-invariant and it is mapped to $\overline{1}$ under the isomorphism above. We conclude that 
$\mathbb{C}_q[x,y]^G$ is a Galois order over  $\mathbb{C}[x^m]$ (cf. Remark \ref{rem-Ore}). The rest follows   from  Proposition \ref{prop-proj}, Lemma \ref{lemma2} and \cite{Bass}, Corollary 4.5. 
\end{proof}

\section{Invariants of quantum Weyl algebras}

Consider now the first quantum Weyl algebra $A_1^q(\k)$, generated over $\k$ by $x$ and $y$ subject to  the relation $yx-qxy=1$. It can be realized as a generalized Weyl algebra
 $D(a, \sigma)$ with  $D=\k[h]$, $a=h$, $\sigma(h)=q^{-1} (h-1)$ and generators $X, Y$. The isomorphism is given as follows: $yx \mapsto h$, $x \mapsto X$, $y \mapsto Y$.
Then $A_1^q(\k)$ is a Galois order over $D$ by \cite{FO1}, as $q$ is not root of unity and $\sigma$ has an infinite order. Moreover, the quantum Weyl algebra $A_n^{{q}}(\k)\simeq
 A_1^q(\k)^{\otimes n}$ is a Galois order over $\Gamma=\k [h_1,\ldots, h_n]$  in $\k(h_1,\ldots, h_n)*\mathbb{Z}^n$, where a basis $\epsilon_1, \ldots, \epsilon_n$ of $\mathbb{Z}^n$ atcs on $\Gamma$ as expected: $\epsilon_i(h_i)=q^{-1}(h_i-1)$; $\epsilon_i(h_j)=h_j$, $i,j=1,\ldots, n$. The embedding is given by:

\[ y_i x_i \mapsto h_i, x_i \mapsto \epsilon_i, \, y_i \mapsto h_i^{-1} \epsilon_i^{-1},\]
$i=1,\ldots, n$

Consider the subring of invariants $A_n^{{q}}(\k)^{S_n}$, where $S_n$ acts by simultaneous permutations of the variables $y_i$  and $x_i$, $i=1,\ldots, n$. 
Using the structure of the quantum Weyl algebra $A_n^{{q}}(\k)$ as a Galois order over $\Gamma=\k [h_1,\ldots, h_n]$  in $\k(h_1,\ldots, h_n)*\mathbb{Z}^n$
we  obtain an embedding of $A_n^{{q}}(\k)^{S_n}$ into the ring $(\k (h_1,\ldots, h_n)*\mathbb{Z}^n)^{S_n}$, where $S_n$ permutes  $h_1,\ldots,h_n$ and acts on $\mathbb{N}^n$ by conjugation: if $\pi \in S_n$ then $\pi(\sigma_i)=\sigma_{\pi(i)}$.

\begin{theorem}\label{thm-quantum-order}
$A_n^{{q}}(\k)^{S_n}$ is a Galois order over $\Gamma=\k[h_1,\ldots, h_n]^{S_n}$. Moreover, $A_n^{{q}}(\k)^{S_n}$ is free as a left (right) $\Gamma$-module.
\end{theorem}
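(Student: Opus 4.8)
The plan is to follow the template of Theorem~\ref{thm-affine-refl-order}, transporting the Galois order structure of the quantum Weyl algebra across the $S_n$-action. Recall that $A_n^{q}(\k)$ is a Galois order over $\Gamma_0=\k[h_1,\ldots,h_n]$ inside $L*\mathbb{Z}^n$, where $L=\k(h_1,\ldots,h_n)$ and $\epsilon_i$ acts by $\epsilon_i(h_i)=q^{-1}(h_i-1)$, $\epsilon_i(h_j)=h_j$ for $j\neq i$, via $x_i\mapsto\epsilon_i$, $y_i\mapsto h_i^{-1}\epsilon_i^{-1}$. The permutation action of $S_n$ extends to $L*\mathbb{Z}^n$ by permuting the $h_i$ and the $\epsilon_i$ simultaneously, the embedding is $S_n$-equivariant, and so $A_n^q(\k)^{S_n}\subset (L*\mathbb{Z}^n)^{S_n}$. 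Since $S_n$ acts faithfully on $L$, the extension $L/L^{S_n}$ is Galois with group $S_n$ and $\Frac(\Gamma)=L^{S_n}$; hence $(L*\mathbb{Z}^n)^{S_n}$ has the form $(L*\mathfrak{M})^G$ required by the definition, with $\mathfrak{M}=\mathbb{Z}^n$ and $G=S_n$ acting by conjugation. The separation condition for $\mathfrak{M}$ is readily verified: if $\epsilon^v$ and $\epsilon^{v'}$ agree on $e_1=\sum_i h_i\in\Frac(\Gamma)$, then comparing the coefficient of each $h_i$ in $\epsilon^v(e_1)=\sum_i q^{-v_i}h_i+(\text{const})$ forces $q^{-v_i}=q^{-v_i'}$, whence $v=v'$ as $q$ is not a root of unity.

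First I would check that $A_n^q(\k)^{S_n}$ is a Galois ring over $\Gamma=\k[h_1,\ldots,h_n]^{S_n}$. By the Chevalley--Shephard--Todd theorem $\Gamma$ is the polynomial algebra $\k[e_1,\ldots,e_n]$ in the elementary symmetric functions of the $h_i$, and it is contained in $A_n^q(\k)^{S_n}$ since $h_i=y_ix_i$. By Theorem~\ref{Montgomery} applied with central subring $A=\k$ (using that $A_n^q(\k)$ is Noetherian and $|S_n|^{-1}\in\k$), the invariant ring is a finitely generated $\k$-algebra, hence a finitely generated $\Gamma$-ring. Enlarging a finite generating set by the invariant elements $\sum_i x_i$ and $\sum_i y_i$, whose images have supports $\{\epsilon_1,\ldots,\epsilon_n\}$ and $\{-\epsilon_1,\ldots,-\epsilon_n\}$, the union of supports contains $\{\pm\epsilon_i\}$ and therefore generates $\mathbb{Z}^n$ as a monoid. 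Proposition~\ref{prop-supp}(i) then shows $A_n^q(\k)^{S_n}$ is a Galois ring over $\Gamma$.

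Next I would pass from Galois ring to Galois order via projectivity, applying Lemma~\ref{lemma2} with $U=A_n^q(\k)$, commutative subalgebra $\Gamma_0=\k[h_1,\ldots,h_n]$, and $H=S_n$. The generalized Weyl algebra structure exhibits $A_n^q(\k)$ as a $\mathbb{Z}^n$-graded algebra whose graded components are free rank-one $\Gamma_0$-modules, so $A_n^q(\k)$ is free over $\Gamma_0$; and $\Gamma_0$ is free over $\Gamma_0^{S_n}=\Gamma$ by the classical freeness of a polynomial ring over its ring of symmetric functions. Lemma~\ref{lemma2} then yields that $A_n^q(\k)^{S_n}$ is projective over $\Gamma$, and since $\Gamma$ is a Noetherian domain, Proposition~\ref{prop-proj} upgrades the Galois ring to a Galois order. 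Finally, because $\Gamma$ is a polynomial algebra and $A_n^q(\k)^{S_n}$ is an (infinitely generated) projective $\Gamma$-module, \cite{Bass}, Corollary~4.5 gives freeness as a left (right) $\Gamma$-module.

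I expect the main obstacle to be the Galois ring step. One must simultaneously ensure that $\Gamma$ is recovered correctly inside the invariants and that invariant elements are available whose supports generate the \emph{group} $\mathbb{Z}^n$ rather than merely a submonoid; this is precisely why both $\sum_i x_i$ and $\sum_i y_i$ are needed, supplying the positive and negative generators, in contrast to the purely positive $\mathbb{N}^n$ situation of Proposition~\ref{prop-affine-cyclic-order}. Once the generalized Weyl algebra freeness of $A_n^q(\k)$ over $\Gamma_0$ is in hand, the remaining steps are formal applications of the cited results.
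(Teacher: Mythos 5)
Your proposal is correct and follows essentially the same route as the paper's proof: finite generation of the invariant ring via Theorem \ref{Montgomery}, adjoining $\sum_i x_i$ and $\sum_i y_i$ so that the supports generate $\mathbb{Z}^n$ as a monoid (Proposition \ref{prop-supp}(i)), and then upgrading to a free Galois order via Lemma \ref{lemma2}, Proposition \ref{prop-proj}, and Bass's theorem. The only difference is that you verify details the paper leaves implicit (the separation condition on $\mathfrak{M}$, freeness of $A_n^q(\k)$ over $\k[h_1,\ldots,h_n]$ and of the latter over its symmetric invariants), which are correct and worth making explicit.
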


\begin{proof}
The algebra $A_n^{{q}}(\k)^{S_n}$  is finitely generated  by Theorem \ref{Montgomery}. Choose  generators $u_1, \ldots, u_k$ and add to this list the  elements $x_1 + \ldots + x_n$ and $y_1+ \ldots + y_n$. 
 The images of the latter two elements in $(\k(h_1,\ldots, h_n)*\mathbb{Z}^n)$ are $\epsilon_1 + \ldots + \epsilon_n$ and $h_1^{-1} \epsilon_1^{-1} + \ldots + h_n^{-1} \epsilon_n^{-1}$ respectively. Hence the support of their image generate $\mathbb{Z}^n$ as a group, and the first statement follows from Proposition \ref{prop-supp}.  The seond statement follows  from  Proposition \ref{prop-proj}, Lemma \ref{lemma2} and \cite{Bass}, Corollary 4.5. 
\end{proof}

We have the following analog of Theorem \ref{thm-plane-order} for the first quantum Weyl algebra when $\k = \mathbb{C}$:

\begin{proposition}\label{prop-A1-order}
Let $G$ be any finite group  of automorphisms of $A_1^q(\mathbb{C})$. Then the invariant subring $A_1^q(\mathbb{C})^G$ is a Galois order over  $\Gamma=\mathbb C[x^m]$ in $\mathbb C(x^m)*\mathbb{N}$. Moreover, $A_1^q(\mathbb{C})^G$ is free as a left (right) $\Gamma$-module.
\end{proposition}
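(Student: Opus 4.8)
The plan is to reduce the statement to the quantum-plane case already handled in Proposition \ref{prop-Dumas} and Theorem \ref{thm-plane-order}, exploiting the fact that for $\k=\mathbb{C}$ every finite group of automorphisms of $A_1^q(\mathbb{C})$ is diagonalizable. First I would recall that the skew field of fractions of $A_1^q(\mathbb{C})$ coincides with that of the quantum plane $\mathbb{C}_q[x,y]$, since the two algebras are birationally equivalent by the remark after Proposition \ref{prop-birational} (or \cite{BG}). The strategy is therefore to realize $A_1^q(\mathbb{C})^G$ inside the same Ore extension $\mathbb{C}(x^m)[v;\sigma]\cong \mathbb{C}(x^m)*\mathbb{N}$ that appears in the proof of Theorem \ref{thm-plane-order}, with $\sigma(x^m)=q^n x^m$ for suitable $n,m>0$, so that the Galois-order machinery (Remark \ref{rem-Ore}) applies verbatim over $\Gamma=\mathbb{C}[x^m]$.

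Concretely, the key steps are the following. First I would invoke the classification of \cite{Alev4}: every finite group $G$ of automorphisms of $A_1^q(\mathbb{C})$ is conjugate to a subgroup of the torus acting diagonally, so that $G$ has the form $G_m\times G_{m'}$ acting by $x\mapsto \alpha x$, $y\mapsto \beta y$ for roots of unity $\alpha,\beta$. Second, passing to the localization of $A_1^q(\mathbb{C})$ at $x$ and computing the subring of invariants, the analysis of \cite{Dumas} gives that $A_1^q(\mathbb{C})_x^G$ is an Ore extension $\mathbb{C}(x^m)[v;\sigma]$ with $\sigma(x^m)=q^n x^m$ and with $v$ a suitable invariant monomial in $x$ and $y$ (after multiplying by a power of $x^m$). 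Third, since $q$ is not a root of unity, the automorphism $\sigma$ has infinite order, so by Remark \ref{rem-Ore} the Ore extension is a Galois ring (indeed Galois order) over $\mathbb{C}[x^m]$ in $\mathbb{C}(x^m)*\mathbb{N}$; because the support of the image of $v$ is the generator of $\mathbb{N}$, the invariant subring $A_1^q(\mathbb{C})^G$ is itself a Galois ring over $\Gamma=\mathbb{C}[x^m]$ by Proposition \ref{prop-supp}(i). Finally, the freeness and the promotion from Galois ring to Galois order follow exactly as in Theorem \ref{thm-plane-order}: since $\Gamma=\mathbb{C}[x^m]$ is a polynomial algebra in one variable, $A_1^q(\mathbb{C})^G$ is a projective (hence, by \cite{Bass}, Corollary 4.5, free) $\Gamma$-module, and Proposition \ref{prop-proj} together with Lemma \ref{lemma2} upgrades the Galois ring to a Galois order.

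The subtle point, and the step I expect to require the most care, is checking that the invariant $v$ can genuinely be arranged to be a monomial $x^k y^l$ lying in $A_1^q(\mathbb{C})^G$ rather than merely in the localization, and that after clearing denominators by $x^m$ the resulting element still generates $\mathbb{N}$ with the correct support; this is the quantum-Weyl analog of the monomial bookkeeping in Proposition \ref{prop-Dumas}, and one must verify that the defining relation $yx-qxy=1$ (as opposed to the homogeneous quantum-plane relation) does not obstruct the diagonal form of the action or the identification of $A_1^q(\mathbb{C})_x^G$ with the stated Ore extension. Once the birational identification with the quantum plane is made precise, however, this reduces to the computation already carried out in \cite{Dumas}, so the remaining arguments are formal.
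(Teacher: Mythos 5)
Your overall strategy---localize at $x$, identify the invariants of the localization with an Ore extension $\mathbb{C}(x^m)[v;\sigma]\cong\mathbb{C}(x^m)*\mathbb{N}$, then run Remark \ref{rem-Ore}, Proposition \ref{prop-proj}, Lemma \ref{lemma2} and \cite{Bass}---is the same as the paper's, but two of your concrete steps fail as stated, and the point you flagged as ``subtle'' is exactly where the argument breaks. First, the classification you quote is the one for the quantum \emph{plane} (\cite{Alev4}), not for $A_1^q(\mathbb{C})$. The inhomogeneous relation $yx-qxy=1$ forces any diagonal automorphism $x\mapsto\alpha x$, $y\mapsto\beta y$ to satisfy $\alpha\beta=1$, so the finite automorphism groups of $A_1^q(\mathbb{C})$ are cyclic: $G\cong G_m$ acting by $x\mapsto\alpha x$, $y\mapsto\alpha^{-1}y$. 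This is the result of Alev--Dumas \cite{Alev2} that the paper invokes; there is no $G_m\times G_{m'}$ with independent weights here.

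Second, and more seriously, the Dumas computation cannot be applied to $A_1^q(\mathbb{C})_x$ with $v$ a monomial $x^ky^l$: in the quantum Weyl algebra, monomials in $x,y$ do not skew-commute with $\mathbb{C}[x^m]$, since $yx^m=q^mx^my+(1+q+\cdots+q^{m-1})x^{m-1}$, so $x^ky^l$ does not normalize $\mathbb{C}(x^m)$ and one does not get an Ore extension structure on such a $v$. The missing idea---which is the entire content of the paper's proof---is the change of variables $z=(q-1)xy+1$. One checks $zx=qxz$, so $x$ and $z$ generate a quantum plane and $A_1^q(\mathbb{C})_x$ is identified with $\mathbb{C}_q[x,z]_x$; moreover $z$ is automatically $G$-invariant precisely because the weights of $x$ and $y$ are mutually inverse: $g(z)=(q-1)\alpha x\,\alpha^{-1}y+1=z$. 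With this substitution the problem genuinely reduces to Proposition \ref{prop-Dumas} and Theorem \ref{thm-plane-order}, applied to $\mathbb{C}_q[x,z]$ with the diagonal $G_m$-action on $x$ and trivial action on $z$, and the rest of your argument (Galois ring via supports, then projectivity, then freeness via \cite{Bass}) goes through. Without this substitution, your identification of $A_1^q(\mathbb{C})_x^G$ with the stated Ore extension has no justification, so the gap you anticipated is real and is closed only by this explicit reduction to the quantum plane.
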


\begin{proof}
Again, by  Alev and Dumas (\cite{Alev2}), every finite group $G$ of automorphisms of $A_1^q(\mathbb{C})$ is of the form $G_m$, where the generator of $G_m$  acts by: $x \mapsto \alpha x$, $y \mapsto \alpha^{-1} y$ for some  $m$th primitive root of unity $\alpha$.  Localization of $A_1^q(\mathbb{C})$ by $x$ is isomorphic to  $\mathbb{C}(x)[z, \sigma]$, 
with $z=(q-1)xy +1$ and $\sigma(x)=qx$. On the other hand, 
 $\mathbb{C}(x)[z, \sigma]$ is just the localization of $\mathbb{C}_q [x, z]$ by $x$. By Theorem \ref{thm-plane-order} we obtain an embedding of $A_1^q(\mathbb{C})^G$ into $\mathbb{C}(x^m)[v; \sigma] \cong \k(x^m)*\mathbb{N}$, where $\sigma (x^m)=q^n x^m$. 
\end{proof}

\section{Quantum Gelfand-Kirillov conjecture}\label{QGKC}

The \emph{quantum Gelfand-Kirillov conjecture} (cf. \cite{BG}, \cite{FH})
compares the skew field of fractions of a given algebra with \emph{quantum Weyl fields}, that is the skew field of fractions of the tensor product of quantum Weyl algebras
$A_1^{q_1}(\k)\otimes_\k \cdots \otimes_\k A_1^{q_n}(\k)$ (or, equivalently, of some quantum affine space).    An algebra $A$ is said to satisfy the  quantum Gelfand-Kirillov conjecture if $\Frac (A)$ is isomorphic to 
a quantum Weyl field over a purely transcendental extension of $\k$.  We will say that two domains $D_{1}$ and $D_{2}$ are \emph{birationally
equivalent} if $\Frac (D_{1})\simeq \Frac (D_{2})$.

The quantum Gelfand-Kirillov conjecture is strongly connected with the \\ \emph{$q$-difference Noether} \emph{problem} for reflection groups introduced in \cite{Hartwig}. This problem asks whether the invariant quantum Weyl subfield 
$(\Frac A_n^{{q}}(\k))^W$ is isomorphic to some quantum Weyl field, where $W$ is a reflection group. The
 positive solution of the $q$-difference Noether problem was obtained in \cite{Hartwig} for classical reflection groups. Using this fact, the validity of the  quantum Gelfand-Kirillov conjecture was shown for the quantum universal enveloping algebra $U_q(gl_n)$ (\cite{FH}) and for the quantum Orthogonal Gelfand-Zetlin algebras of type $A$ (\cite{Hartwig}). The latter class includes  the simplyconnected quantized form of $gl_n$, $\check{U}(gl_n)$ and the quantized Heisenberg Liealgebra among the others.

\subsection{Functions on the quantum $2$-sphere}

Denote by $A(S^2_\lambda)$ the algebra of functions on the quantum $2$-sphere. The algebra $A(S^2_\lambda)$ is the quotient of $\mathbb{C} \langle X,Y,H \rangle$ by the relations

$$XH = \lambda HX, \ YH=\lambda^{-1} HY, $$
$$ \lambda^{1/2}YX=-(c-H)(d+H), \,  \lambda^{-1/2}XY = -(c- \lambda H)(d+ \lambda H). $$

It can be realized as a generalized Weyl algebra $\mathbb{C}[H](a, \sigma)$, where $$a = -\lambda^{-1/2}XY (c-H)(d+H))$$ and  $\sigma(H) = \lambda H$.
By Proposition \ref{prop-birational}, $\mathbb{C}[H](a, \sigma)$  
 is birationally equivalent to  $\mathbb{C}[H] * \mathbb{Z}$, where $\mathbb{Z}$ is generated by $\overline{1}$ and $\overline{1}(H)= \lambda H$. Applying Proposition \ref{prop-affine-cyclic-order} 
 we obtain that $A(S^2_\lambda)$ is birationally equivalent to the quantum plane with parameter $\lambda$.  Hence,  $A(S^2_\lambda)$ satisfies the quantum Gelfand-Kirillov conjecture, that is

\begin{corollary}\label{cor-sphere}
%\emph{(Quantum Gelfand-Kirillov Conjecture for the quantum $2$-sphere)}
$\Frac \, A(S^2_\lambda) \cong \Frac \, \k_\lambda[x,y]$.
\end{corollary}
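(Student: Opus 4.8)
The plan is to exhibit an explicit chain of birational equivalences connecting $A(S^2_\lambda)$ to the quantum plane $\k_\lambda[x,y]$, and all of the machinery needed for this has already been assembled immediately before the statement. First I would record the realization of $A(S^2_\lambda)$ as the generalized Weyl algebra $\mathbb{C}[H](a,\sigma)$ with $\sigma(H)=\lambda H$, which is given in the paragraph preceding the corollary; this is a routine verification that the defining relations of $A(S^2_\lambda)$ match the generalized Weyl algebra relations $XH=\sigma(H)X=\lambda HX$, $YX=a$, $XY=\sigma(a)$ for the stated central element $a=-\lambda^{-1/2}(c-H)(d+H)$.

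The second step is to invoke Proposition \ref{prop-birational} (with $D=\mathbb{C}[H]$, $n=1$) to pass from $\mathbb{C}[H](a,\sigma)$ to the skew group ring $\mathbb{C}[H]*\mathbb{Z}$, where the generator $\overline{1}$ acts by $\overline{1}(H)=\lambda H$. These two algebras share a skew field of fractions, so $\Frac\,A(S^2_\lambda)\cong\Frac\,(\mathbb{C}[H]*\mathbb{Z})$. Next I would relate $\mathbb{C}[H]*\mathbb{Z}$ to the quantum plane: by Proposition \ref{prop-affine-cyclic-order}, the quantum plane $\k_\lambda[x,y]$ is isomorphic to $\k[x]*\mathbb{N}$ with $\epsilon_1(x)=\lambda x$, and inverting the generator (via the localization described in Remark \ref{rem-Ore}) yields $\k[x^{\pm 1}]*\mathbb{Z}$ with the same scaling action. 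Since passing from $\mathbb{C}[H]*\mathbb{Z}$ to $\k(H)*\mathbb{Z}$ and matching the variable $x\leftrightarrow H$ with $\lambda\leftrightarrow\lambda$ identifies the two skew group fields, one concludes $\Frac\,(\mathbb{C}[H]*\mathbb{Z})\cong\Frac\,\k_\lambda[x,y]$. Concatenating the isomorphisms gives the claimed $\Frac\,A(S^2_\lambda)\cong\Frac\,\k_\lambda[x,y]$.

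Since the constituent propositions do all the heavy lifting, the only genuinely substantive point is the first step: checking that the presented generators and relations of $A(S^2_\lambda)$ really do assemble into the generalized Weyl algebra $\mathbb{C}[H](a,\sigma)$. In particular I would verify that $a$ is central in $D=\mathbb{C}[H]$ (automatic, since $D$ is commutative) and that $\sigma(a)$ correctly reproduces the relation $\lambda^{-1/2}XY=-(c-\lambda H)(d+\lambda H)$, i.e. that applying $\sigma(H)=\lambda H$ to $a$ matches the second quadratic relation. The hard part, if any, is confirming that the normalization factors $\lambda^{\pm 1/2}$ and the product $(c-H)(d+H)$ are handled consistently under $\sigma$ so that $XY=\sigma(YX)=\sigma(a)$ holds exactly as written; once this bookkeeping is in place, the remaining steps are purely formal applications of the cited results.
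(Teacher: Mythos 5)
Your proposal follows essentially the same route as the paper's own argument: realize $A(S^2_\lambda)$ as the generalized Weyl algebra $\mathbb{C}[H](a,\sigma)$ with $\sigma(H)=\lambda H$, pass to $\mathbb{C}[H]*\mathbb{Z}$ by Proposition \ref{prop-birational}, and then identify fraction fields with that of the quantum plane via Proposition \ref{prop-affine-cyclic-order} together with the localization of Remark \ref{rem-Ore}. The only deviation is cosmetic (writing $\k[x^{\pm 1}]*\mathbb{Z}$ where inverting the monoid generator actually gives $\k[x]*\mathbb{Z}$, which is harmless at the level of skew fields of fractions), and your attention to the $\lambda^{\pm 1/2}$ normalization in the GWA realization is if anything more careful than the paper, whose formula for $a$ contains a typo.
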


\subsection{The quantum group $O_{q^2}(so(3,\mathbb{C}))$}

Let $A=O_{q^2}(so(3,\mathbb{C}))$. The algebra $A$ can be realized as a generalized Weyl algebra  
$\mathbb{C}[H,C](\sigma, a)$, where $a= C+H^2/q(1+q^2))$ and $\sigma(C)=C$, $\sigma(H)=q^2 H$. 
By Proposition \ref{prop-birational}, 
 $A$ is birationally equivalent to $\mathbb{C}[C,H]*\mathbb{Z}$, where $\mathbb{Z}$ is generated by $\overline{1}$ acting  as $\sigma$ on $\mathbb{C}[C,H]$. 
 Since $C$ is invariant by $\sigma$, this ring is clearly birationaly equivalent  to $\mathbb{C}[C] \otimes (\mathbb{C}[H] * \mathbb{Z})$.  Applying Proposition \ref{prop-affine-cyclic-order} 
 we obtain that  $A$ satisfies the quantum Gelfand-Kirillov conjecture, that is

\begin{corollary}\label{cor-so}
%\emph{(Quantum Gelfand-Kirillov Conjecture for the quantized $so(3,\mathbb{C})$)}
$\Frac \, O_{q^2}(so(3,\mathbb{C})) \cong \Frac \, (\mathbb{C}(C) \otimes \mathbb{C}_{q^2}[x,y])$.
\end{corollary}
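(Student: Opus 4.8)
The plan is to mirror the argument used for Corollary \ref{cor-sphere}, now exploiting the generalized Weyl algebra presentation of $A = O_{q^2}(so(3,\mathbb{C}))$. First I would confirm the stated realization $A \cong \mathbb{C}[H,C](a,\sigma)$, with $a = C + \frac{H^2}{q(1+q^2)}$, $\sigma(C) = C$ and $\sigma(H) = q^2 H$, by checking on generators that the generalized Weyl relations reproduce the defining relations of $O_{q^2}(so(3,\mathbb{C}))$. Since the single automorphism $\sigma$ has infinite order, Proposition \ref{prop-birational} applies and lets me replace $A$ by the birationally equivalent skew group ring $\mathbb{C}[C,H] * \mathbb{Z}$, where the generator $\overline{1}$ of $\mathbb{Z}$ acts on $\mathbb{C}[C,H]$ as $\sigma$.

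The decisive observation is that $C$ is $\sigma$-invariant, hence central in $\mathbb{C}[C,H]*\mathbb{Z}$, because the $\mathbb{Z}$-action affects only $H$. This splits the skew group ring as a tensor product $\mathbb{C}[C] \otimes (\mathbb{C}[H]*\mathbb{Z})$, in which the first factor is an ordinary polynomial ring in a central variable and the second carries the action $\overline{1}(H) = q^2 H$. Passing to fraction fields, $\Frac(\mathbb{C}[C]) = \mathbb{C}(C)$ is a purely transcendental extension of $\mathbb{C}$, while $\mathbb{C}[H]*\mathbb{Z}$ is the localization by $H$ of $\mathbb{C}[H]*\mathbb{N}$ and so, by Proposition \ref{prop-affine-cyclic-order} in rank one, is birationally equivalent to the quantum plane $\mathbb{C}_{q^2}[x,y]$. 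Combining these identifications yields $\Frac(A) \cong \Frac(\mathbb{C}(C) \otimes \mathbb{C}_{q^2}[x,y])$, which is precisely a quantum Weyl field over a purely transcendental extension of $\mathbb{C}$, confirming the quantum Gelfand-Kirillov conjecture for $A$.

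The only genuinely delicate point is the compatibility of the tensor splitting with the two localizations: one must check that inverting the Ore set generated by $H$ (to pass from the monoid $\mathbb{N}$ to the group $\mathbb{Z}$) and adjoining $\mathbb{C}(C)$ are independent operations, so that the skew field of fractions of $\mathbb{C}[C] \otimes (\mathbb{C}[H]*\mathbb{Z})$ is literally $\Frac(\mathbb{C}(C)\otimes \mathbb{C}_{q^2}[x,y])$ and not some twisted variant. Because $C$ is central and fixed by $\sigma$, the two localizations commute and the tensor decomposition is preserved, so this step goes through; it is exactly the extra feature distinguishing the present case from Corollary \ref{cor-sphere}, where no additional central invariant appears. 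Everything else is the routine bookkeeping already carried out in the $2$-sphere case.
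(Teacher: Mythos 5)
Your proposal follows exactly the paper's own argument: the generalized Weyl algebra realization, Proposition \ref{prop-birational} to pass to $\mathbb{C}[C,H]*\mathbb{Z}$, the splitting off of the central $\sigma$-invariant $C$ as $\mathbb{C}[C]\otimes(\mathbb{C}[H]*\mathbb{Z})$, and Proposition \ref{prop-affine-cyclic-order} to identify $\mathbb{C}[H]*\mathbb{Z}$ birationally with the quantum plane $\mathbb{C}_{q^2}[x,y]$. One small slip worth correcting: $\mathbb{C}[H]*\mathbb{Z}$ is the Ore localization of $\mathbb{C}[H]*\mathbb{N}\cong\mathbb{C}_{q^2}[x,y]$ at the monoid generator (the image of $y$), not at $H$ (inverting $H$ would give $\mathbb{C}[H^{\pm 1}]*\mathbb{N}$ instead); since either localization leaves the skew field of fractions unchanged, your conclusion is unaffected.
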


\subsection{Quantum Linear Galois Algebras}
In this section we obtain 
 a quantum version of the theory of linear Galois algebras developed in \cite{Eshmatov}. The field $\k$ is assumed to be the field of complex numbers. Recall that  $U$  is a \emph{Galois algebra} over $\Gamma$ if $U$ is 
 a Galois ring    over $\Gamma$ and $\k$-algebra. 

Let $V$ be a finite dimensional complex vector space, $S=S(V^*)=$ $\mathbb{C}[x_1,\ldots, x_n]$, and $L = \Frac \, S$. Let $G$ be a unitary reflection group which is a product of groups of  type $G(m,p,n)$. Consider 
the tensor product of polynomial algebras $S \otimes \mathbb{C}[w_1,\ldots,w_m]$, with the trivial action of $G$ on the second component.

A  \emph{quantum linear Galois algebra} $U$ is a Galois algebra over an appropriate $\Gamma$ in $(\mathbb{C}(x_1,\ldots,x_n;w_1,\ldots,w_m)*\mathbb{Z}^n)^G$ or $(\mathbb{C}(x_1,\ldots,x_n;w_1,\ldots,w_m)*\mathbb{N}^n)^G$, where a basis $\epsilon_1, \ldots, \epsilon_n$  of either $\mathbb{Z}^n$ or $\mathbb{N}^n$ acts as follows: $\epsilon_i(x_i) = qx_i$, $ \epsilon_i(x_j) = x_j$, $j \neq i$,  $i,j=1,\ldots, n$.

 Note that the quantum universal enveloping algebra $U_q(gl_n)$ and  the quantum orthogonal Gelfand-Zetlin algebras of type $A$ are examples of quantum linear Galois algebras
 \cite{FH}, \cite{Hartwig}.   The results of the previous sections show that the following algebras are also quantum linear Galois algebras:\\
 
 \
 \begin{itemize}
 \item  $O_q(\k^{2n})^G$ for $G=G(m,p,n)$;
 \item $A_n^{{q}}(\k)^{S_n}$;
 \item $O_q(\k*^{2n})^G$ for $G=G(m,p,n)$.
 \end{itemize}
\

The following theorem shows that the quantum Gelfand-Kirillov Conjecture holds for quantum linear Galois algebras, which is the quantum
 analogue of \cite{Eshmatov}, Theorem 6.

\begin{theorem}\label{thm-qlga}
Let $U$ be a quantum linear Galois algebra in $$(\mathbb{C}(x_1,\ldots,x_n;w_1,\ldots,w_m)*X^n)^G,$$ where $X$ is either $\mathbb{Z}$ or $\mathbb{N}$, with the $G$ action as above. Then  the quantum Gelfand-Kirillov conjecture holds for $U$ and
  there exist $l=(l_1,\ldots, l_n) \in \mathbb{Z}^n$  such that $$\Frac \, U \cong \Frac \, (O_{\overline{q}}(k^{2n}) \otimes \mathbb{C}[w_1,\ldots, w_n]),$$ 
 where $\overline{q}=(q^{l_1},\ldots, q^{l_n})$.
\end{theorem}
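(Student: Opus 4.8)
The plan is to reduce the computation of $\Frac U$ to the $q$-difference Noether problem for $G=G(m,p,n)$, whose positive solution (\cite{Hartwig}) supplies the desired quantum Weyl field. Since $U$ is by hypothesis a Galois ring over $\Gamma$, Proposition \ref{prop-supp}(ii) identifies the localization of $U$ at $S=\Gamma\setminus\{0\}$ with $(\mathbb{C}(x_1,\ldots,x_n;w_1,\ldots,w_m)*X^n)^G$, so that $\Frac U=\Frac\big((\mathbb{C}(x;w)*X^n)^G\big)$. If $X=\mathbb{N}$, I would further localize at the images of the $\epsilon_i$, which become units; by Remark \ref{rem-Ore} this replaces $\mathbb{N}^n$ by $\mathbb{Z}^n$ without changing the skew field of fractions. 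Hence one may assume $X=\mathbb{Z}$ throughout, and it remains to compute $\Frac(R^G)$ where $R=\mathbb{C}(x;w)*\mathbb{Z}^n$.

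Next I would interchange the formation of $G$-invariants with that of the skew field of fractions. Because $G$ is finite with $|G|^{-1}\in\mathbb{C}$ and acts by automorphisms of the Ore domain $R$, the fixed subfield $(\Frac R)^G$ is a division ring over which $\Frac R$ is finite, and $\Frac(R^G)=(\Frac R)^G$; this equality is the point to be justified with care, using the averaging operator together with the Ore condition to produce $G$-invariant denominators. Granting it, the problem becomes the computation of $(\Frac R)^G$.

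Here the variables $w_1,\ldots,w_m$ are central and fixed by both $\mathbb{Z}^n$ and $G$, so by Proposition \ref{prop-affine-cyclic-order} together with the birational equivalence of $O_q(\mathbb{C}^{2n})$ with the quantum Weyl algebra one has $\Frac R\cong\Frac\big(O_q(\mathbb{C}^{2n})\otimes\mathbb{C}[w_1,\ldots,w_m]\big)$. Since $G$ acts trivially on the central field $\mathbb{C}(w_1,\ldots,w_m)$, taking $G$-invariants reduces to computing $(\Frac O_q(\mathbb{C}^{2n}))^G$ over this fixed central field, the tensor factor $\mathbb{C}[w_1,\ldots,w_m]$ simply riding along.

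Finally, the $G$-action on $\Frac O_q(\mathbb{C}^{2n})$ is exactly the reflection-group action appearing in the $q$-difference Noether problem, and Hartwig's positive solution (\cite{Hartwig}) for the non-exceptional groups $G(m,p,n)$ yields $(\Frac O_q(\mathbb{C}^{2n}))^G\cong\Frac O_{\overline{q}}(\mathbb{C}^{2n})$ with $\overline{q}=(q^{l_1},\ldots,q^{l_n})$ for suitable $l_i\in\mathbb{Z}$. Combining with the previous paragraph gives $\Frac U\cong\Frac\big(O_{\overline{q}}(\mathbb{C}^{2n})\otimes\mathbb{C}[w_1,\ldots,w_m]\big)$, a quantum Weyl field over the purely transcendental extension $\mathbb{C}(w_1,\ldots,w_m)$, which is precisely the quantum Gelfand--Kirillov conjecture for $U$. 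I expect the main obstacle to be the interchange of invariants and fractions in the noncommutative setting, namely verifying $\Frac(R^G)=(\Frac R)^G$ and checking that the residual $G$-action really is the reflection-group action of the Noether problem, since the genuinely hard content is already packaged in the cited solution of the $q$-difference Noether problem.
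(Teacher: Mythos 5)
Your proposal is correct and takes essentially the same route as the paper, whose entire proof is the two steps you outline: Proposition \ref{prop-supp}(ii) to identify $\Frac\, U$ with the skew field of fractions of $(\mathbb{C}(x_1,\ldots,x_n;w_1,\ldots,w_m)*X^n)^G$, followed by Hartwig's positive solution of the $q$-difference Noether problem for $G(m,p,n)$. The bridging details you supply --- the reduction from $\mathbb{N}^n$ to $\mathbb{Z}^n$, the interchange $\Frac(R^G)=(\Frac\, R)^G$ for the finite group action, and the central variables $w_i$ riding along --- are exactly what the paper leaves implicit, so your write-up is a faithful (and more complete) version of its argument.
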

 
\begin{proof}
The proof follows from Proposition \ref{prop-supp}, (ii) and the positive solution of the $q$-difference Noether problem for $G$ \cite{Hartwig}.
\end{proof}

\subsection{Skew field of fractions of $U(sl_2)$}

Consider the standard basis $e,f,h$ of $sl_2$, where $[h,e]=e$, $[h,f]=-f$, $[e,f]=2h$. 
The universal enveloping algebra $U(sl_2)$ can be realized as a generalized Weyl algebra $\k[H,C](\sigma, a)$, where $a=C-H(H+1))$,
 with the isomorphism given by $e \mapsto X$, $f \mapsto Y$, $h \mapsto H,$  $h(h+1)+fe \mapsto C$.

Define an action of the cyclic group $G_m$ of order $m$, $m>1$ on $U(sl_2)$ as follows. Denote by $g$ a generator of $G_m$.
Then $g$ fixes $h$ and sends $e \mapsto \xi e$, $f \mapsto \xi^{-1} f$, where $\xi$ is a fixed $m$th primitive root of unity. 

We have that  $\k[H,C](\sigma, a)$ (and hence $U(sl_2)$)
 is birationally equivalent  to $\k[H,C]*\mathbb{Z}$, where again $\mathbb{Z}$ acts by $\sigma$. 
 The action of $G_m$  naturally  extends to $\k[H,C]*\mathbb{Z}$, where the generator $g$ acts on $\mathbb{Z}$ by sending $\overline{y} \mapsto \xi^{y}, y \in \mathbb{Z}$. Therefore $U(sl_2)^{G_m}$ embedds into $(\k[H,C]*\mathbb{Z})^{G_m}$.  
 Since $C$ is fixed by $\sigma$ and also by the action of $G_m$, we have 
 $$\Frac  (\k[H,C]*\mathbb{Z})^{G_m} \cong \Frac  (\k[C] \otimes (\k[H]*\mathbb{Z})^{G_m}).$$ 
 
 On the other hand, $k[H]*\mathbb{Z}$ is isomorphic to the localization $A_1(\k)_x = A_1(\k)_{x^m}$ (\cite{FO1}, section 7) of the first Weyl algebra. Hence,
  $$\Frac (\k[H]*\mathbb{Z})^{G_m} \cong \Frac  (A_1(\k)_{x^m})^{G_m} \cong \Frac  (A_1(\k)^{G_m}_{x^m}) \cong \Frac (A_1(\k)^{G_m}),$$

where the action of the generator $g$ on  $A_1(\k)$ is as follows: $x \mapsto \xi^{-1} x$,  $\partial \mapsto \xi \partial$. 

We conclude that  $U(sl_2)^{G_m}$ is birationally equivalent to $\k[C] \otimes A_1(\k)^{G_m}$. Taking into account the result of \cite{Alev1}, which implies that  
$A_1(\k)^{G_m}\simeq A_1(\k)$  we finally have

\begin{corollary}\label{cor-sl2}
For any $m>1$ and the action of $G_m$ described above,  we have $$\Frac (U(sl_2)^{G_m}) \cong \Frac (\k[C] \otimes A_1(\k)) \cong \Frac (\k[C] )\otimes \Frac (U(sl_2)).$$
\end{corollary}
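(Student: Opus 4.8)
The plan is to read the statement off the generalized Weyl algebra realization of $U(sl_2)$ recorded above, transporting the $G_m$-action through a chain of birational equivalences and localizations until it becomes the classical invariant theory of the first Weyl algebra. Concretely, I would fix the isomorphism $U(sl_2)\cong \k[H,C](\sigma,a)$ with $a=C-H(H+1)$, $\sigma(H)=H-1$, $\sigma(C)=C$, and invoke Proposition \ref{prop-birational} to realize a common skew field $\mathcal F:=\Frac\,U(sl_2)\cong\Frac(\k[H,C]*\mathbb{Z})$, in which $\overline 1\in\mathbb{Z}$ acts as $\sigma$. The key point is that both orders sit inside the same $\mathcal F$ and that the $G_m$-action extends to $\mathcal F$ compatibly: since $g$ fixes $h$ and scales $e\mapsto\xi e$, $f\mapsto\xi^{-1}f$, one checks directly that $g(fe)=fe$, hence $g(C)=C$, so on $\k[H,C]*\mathbb{Z}$ the generator $g$ fixes $H$ and $C$ and scales the group-like element, $g(\overline y)=\xi^{y}\overline y$.

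Next, because $C$ is central and fixed by both $\sigma$ and $G_m$, the skew group ring splits as $\k[H,C]*\mathbb{Z}\cong\k[C]\otimes(\k[H]*\mathbb{Z})$ with $G_m$ acting only on the second factor, so that
\[
\Frac\big((\k[H,C]*\mathbb{Z})^{G_m}\big)\cong\Frac\big(\k[C]\otimes(\k[H]*\mathbb{Z})^{G_m}\big).
\]
I would then use the identification $\k[H]*\mathbb{Z}\cong A_1(\k)_x=A_1(\k)_{x^m}$ (cf. Remark \ref{rem-Ore} and \cite{FO1}), under which $G_m$ acts by $x\mapsto\xi^{-1}x$, $\partial\mapsto\xi\partial$. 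Since $x^m$ is $G_m$-invariant, localization at $x^m$ commutes with taking invariants, giving $(A_1(\k)_{x^m})^{G_m}\cong(A_1(\k)^{G_m})_{x^m}$, and localization leaves the skew field of fractions unchanged; hence $\Frac(\k[H]*\mathbb{Z})^{G_m}\cong\Frac(A_1(\k)^{G_m})$. Alev's theorem \cite{Alev1} then supplies $A_1(\k)^{G_m}\cong A_1(\k)$, so the whole chain collapses to $\Frac\,U(sl_2)^{G_m}\cong\Frac(\k[C]\otimes A_1(\k))$, which by the classical Gelfand--Kirillov isomorphism for $sl_2$ is $\Frac\,U(sl_2)$, yielding the stated identifications together with the central factor $\Frac\,\k[C]$.

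The step I expect to be the main obstacle is justifying that each birational equivalence and localization actually \emph{descends} to the invariant rings, i.e. that $\Frac(R^{G_m})\cong\Frac(R)^{G_m}$ for every ring $R$ in the chain and that the embeddings $U(sl_2)^{G_m}\hookrightarrow(\k[H,C]*\mathbb{Z})^{G_m}$ and $A_1(\k)^{G_m}\hookrightarrow(A_1(\k)^{G_m})_{x^m}$ remain orders in the common invariant skew field $\mathcal F^{G_m}$. This is the genuinely noncommutative subtlety: taking $G$-invariants does not in general commute with passing to fraction fields, so one must verify that each invariant order generates $\mathcal F^{G_m}$ over its center. I would handle this with an Ore/going-up argument, using that $G_m$ is finite and that the relevant denominator sets (the powers of $x^m$, and $\Gamma\setminus\{0\}$) are $G_m$-stable, so that inverting them and then taking invariants agrees with taking invariants and then inverting.
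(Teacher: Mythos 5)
Your proposal follows the paper's own proof essentially step for step: the same generalized Weyl algebra realization, the same transport of the $G_m$-action to $\k[H,C]*\mathbb{Z}$ with $g$ scaling the group-like elements, the same splitting off of the central factor $\k[C]$, the same identification $\k[H]*\mathbb{Z}\cong A_1(\k)_x=A_1(\k)_{x^m}$ with commutation of localization and invariants, and the same final appeal to $A_1(\k)^{G_m}\cong A_1(\k)$ from \cite{Alev1}. The only difference is that you flag and sketch a justification for the descent of fraction fields to invariant subrings, which the paper asserts without comment; this is a presentational refinement, not a different argument.
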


The last isomorphism is just the classical Gelfand-Kirillov conjecture for  $sl_2$ \cite{Gelfand}.

Recall, that $U(sl_2)$ is rigid by  \cite{Alev3}, that is there is no non trivial  finite group $G \subset Aut_\k \, U(sl_2)$ such that $U(sl_2)^G \cong U(sl_2)$. By Corollary \ref{cor-sl2},
in spite of the rigidity of $U(sl_2)$ we have $\Frac(U(sl_2)^{G_m})\simeq \Frac(U(sl_2))$, giving an example to the question posed in 
 in \cite{Kirkman}.


\begin{thebibliography}{9}
\bibitem{Alev1} Alev, J.; Dumas,F.; Operateurs differentiels invariants et probleme de Noether, Studies in Lie Theory (eds. J. Bernstein, V. Hinich and A. Melnikov), Birkhauser, Boston, 2006.

 \bibitem{Alev2} Alev, J.; Dumas, F.; Rigidite des plongements des quotients primitifs minimaux de Uq(sl(2)) dans l’algebre quantique de Weyl-Hayashi, Nagoya Math. J. 143 (1996), 119��-146.

\bibitem{Alev3} Alev, J.; Polo, P.; A rigidity theorem for finite group actions on enveloping algebras of
semisimple lie algebras. Advances in Mathematics, 111 (1995), 208-226.

\bibitem{Alev4} Alev, J.;  Chamarie, M; Automorphismes et deerivations de quelques algeebres quantiques, Commun. Algebra, 20 (1992), 1787-1802.

\bibitem{Bass} Bass, H.; Big projective modules are free, Illinois J. Math., vol 7, (1963), 24-31.

\bibitem{Bavula}[Ba] Bavula, V.; Generalized Weyl algebras and their representations, Algebra i Analiz 4 (1992), 75-97. English translation: St. Petersburg Math. J. 4 (1993) 71-92.

\bibitem{BG}[BG] Brown K.A., Goodearl K.R., Lectures on algebraic quantum groups, Advance course in
Math. CRM Barcelona, vol 2., Birkhauser Verlag, Basel, 2002

\bibitem{Dumas} Dumas, F.; An Introduction to Non commutative polynomial invariants, Lecture Notes, Homological
methods and representations of non commutative algebras, Mar del Plata, Argentina, March 6-16, 2006.

\bibitem{Eshmatov} Eshmatov, F.; Futorny, V.; Ovsienko, S.; Schwarz, J.; Noncommutative Noether’s Problem for Unitary Reflection Groups, Proceedings of the American Mathematical Society, 145 (2017), 5043-5052.

\bibitem{FH} Futorny, V.; Hartwig, J. T.;. Solution of a q-difference noether problem and the quantum
gelfand-kirillov conjecture for gln. Mathematische Zeitschrift, 276:1-37, 2014.


\bibitem{FO1}[FO1] Futorny, V.; Ovsienko, S.; Galois orders in skew monoid rings, J. of Algebra, 324 (2010), 598-630.

\bibitem{FO2}[FO2] Futorny, V; Ovsienko, S; Fibers of characters in Gelfand-Tsetlin categories, Transactions of The American Mathematical Society, v. 366 (2014), 4173-4208.


\bibitem{FS1}[FS1] Futorny, V.; Schwarz, J.; Galois orders of symmetric differential operators,
Algebra and Discrete Mathematics, Volume 23 (2017) 35-46.

\bibitem{FS2}[FS2] Futorny, V.; Schwarz, J.;  Algebras of invariant differential operators, 2018, arXiv:1804.05029.

\bibitem{FMO}[FMO] Futorny, V.; Molev, A.; Ovsienko, S.; The Gelfand-Kirillov conjecture and Gelfand-Tsetlin
modules for finite W-algebras, Advances in Mathematics 223 (2010), 773-796.


%\bibitem{Futorny4} Futorny, V.; Molev, A.; Ovsienko, A.; Harish-Chandra modules for Yangians, Repr. Theory AMS 9 (2005), 426�-454.


\bibitem{Gelfand} Gelfand, I. M.; Kirillov, A. A.; Sur les corps li\'es aux alg\`ebres envoloppantes des alg\`ebres
de Lie, volume 31 of Inst. Hautes Etudes Sci. Publ. Mat., 5-19, 1966.

\bibitem{Hartwig} Hartwig, J. T.; The q-difference noether problem for complex reflection groups and quantum OGZ algebras. Communications in Algebra, 45:1166-1176, 2017.

\bibitem{Kirkman} Kirkman, E,; Kuzmanovich, J.; Zhang, J.; Rigidity of graded regular algebras. Transactions of the American Mathematical Society, 360:6331-6369, 2008.

\bibitem{Montgomery} Montgomery, S.; Small, L. W.; Fixed rings of noetherian rings. Bull. London. Math. Soc., 13:33-38, 1981.

\bibitem{Ovsienko}[O] Ovsienko, S.; Finiteness statements for Gelfand-Tsetlin modules, Proceedings of Third International Algebraic
Conference in  Ukraine (Ukrainian), Natsional. Akad. Nauk Ukrainy, Inst. Mat., Kiev,
(2002), 323-338.
\end{thebibliography}
\end{document}